\newtheorem{theorem}{Theorem}[section]
\newtheorem{lemma}[theorem]{Lemma}
\newtheorem{proposition}[theorem]{Proposition}
\theoremstyle{definition}
\theoremstyle{remark} \theoremstyle{remark}
\numberwithin{equation}{section}
\DeclareMathOperator*{\esssup}{\mathrm{ess\,sup}}
\newcommand{\Rd}{\mathds{R}^d}
\newcommand{\intRd}{\int\limits_{\mathds{R}^d}}
\newcommand{\intGamN}{\int\limits_{\Gamma_0}}
\newcommand{\suml}[1]{\sum\limits_{#1}}
\newcommand{\prodl}[1]{\prod\limits_{#1}}
\newcommand{\intl}[1]{\int\limits_{#1}}
\newcommand{\bs}{\backslash}
\newcommand{\cOne}{\langle c_1 \rangle}
\newcommand{\cTwo}{\langle c_2 \rangle}
\newcommand{\phiOne}{\langle \phi_1 \rangle}
\newcommand{\phiTwo}{\langle \phi_2 \rangle}
\title[A kinetic equation for repulsive coalescing jumps]{A kinetic equation for repulsive coalescing random jumps in continuum }
\author{Krzysztof Pilorz}
\address{Instytut Matematyki, Uniwersytet Marii Curie-Sk{\l}odowskiej, 20-031 Lublin, Poland}
\email{kpilorz@gmail.com}
\begin{document}

\subjclass[2010]{60K35; 35Q83; 82C22}
\keywords{Coalescence, Coagulation, Hopping particles, Individual-based model, Configuration spaces, Infinite particle system, Microscopic dynamics, Vlasov scaling, Kinetic equation}

\begin{abstract}
A continuum individual-based model of hopping and coalescing
particles is introduced and studied. Its microscopic dynamics are
described by a hierarchy of evolution equations obtained in the
paper. Then the passage from the micro- to mesoscopic dynamics is
performed by means of a Vlasov-type scaling. The existence and
uniqueness of the solutions of the corresponding kinetic equation
are proved.
\end{abstract}
\maketitle

\section{Introduction}
In this paper, we introduce and study the dynamics of an infinite
system of particles located in $\Rd$, which jump and merge
(coalesce). Both jumping and coalescing are repulsive. The proposed
model is individual based, which means that the description of its
Markov dynamics are performed in terms of random changes of states
of individual particles. In the proposed model, such changes
include: (a) the particle located at a given $x\in \Rd$ changes its
position to $y\in \Rd$ (jumping); (b) two particles, located at
$x\in \Rd$ and $y\in \Rd$, merge into a single particle located at
$z\in \Rd$ (coalescing). The rates of these events depend on the
configuration of all particles. The model proposed can be viewed as
an extension of the Kawasaki model with repulsion studied in
\cite{Kawasaki} where only random jumps with repulsion are taken
into account. To the best of our knowledge, the microscopic modeling
of this kind of merging is performed here for the first time. All
previous theories, see, e.g., \cite{Aldous,
EvolutionaryEquationsBanasiak, StrongFragmentation,
FragmentationCoagulation, Kolokoltsov, Lachowicz,
EvolutionaryEquationsLamb},  deal with the densities of the
particles and hence do not take into account the corpuscular
structure of the system. However,  we do not  take into account the
particle mass, which is supposed to be done in an extension of the
present model.

The state space of the system considered is the configuration space,
that is, the set of all locally finite subsets of $\Rd$
$$\Gamma = \Gamma(\Rd) = \Big\{ \gamma \subset \Rd: \gamma \cap \Lambda \text{ is finite for every compact } \Lambda \subset \Rd \Big\}.$$
It can be given a measurability structure which turns $\Gamma$ into
a standard Borel space. This allows one to consider probability
measures on $\Gamma$ as states of the system. To characterize them
one uses observables, which are appropriate functions $F: \Gamma
\rightarrow \mathds{R}$. For an observable $F$ and a state $\mu$,
the number
\[
\int_{\Gamma} F d \mu
\]
is the $\mu$-expected value of $F$. Then the evolution of states
$\mu_0 \mapsto \mu_t$ can be described via the dual evolution $F_0
\mapsto F_t$ based on the following duality relation
\[
\int_{\Gamma} F_0 d \mu_t =
\int_{\Gamma} F_t d \mu_0, \qquad t>0.
\]
The evolution of observables is obtained in turn from the Kolmogorov equation
\[
\frac{d}{dt} F_t = L F_t, \qquad F_t|_{t=0} = F_0,
\]
in which the `operator' $L$ characterizes the model, see
\cite{SLMandKE, VlasovScaling, StatisticalDynamics,
MarkovEvolutions} for more detail. In  the proposed model, it has
the following form
\begin{eqnarray}\label{OperatorL}
LF(\gamma) &=& \suml{\{x,y\} \subset \gamma} \  \intRd \tilde{c}_1(x,y;z;\gamma) \Big(F\big(\gamma \bs \{x,y\} \cup z \big) - F(\gamma) \Big) dz \nonumber \\[.2cm]
&+& \suml{x \in \gamma} \  \intRd \tilde{c}_2(x;y;\gamma) \Big(F\big(\gamma \bs x \cup y\big) - F(\gamma) \Big) dy.
\end{eqnarray}
The first term of $L$ describes the coalescence occurring with
intensity $\tilde{c}_1(x,y;z;\gamma)$. The particles located at $x$
and $y$  merge into a new particle located at a point $z$. The
second term describes the jump of the particle located at $x$  to a
point $y$ with intensity $\tilde{c}_2(x;y;\gamma)$. The model with
$L$ consisting of the second term only is the Kawasaki model studied
in \cite{Kawasaki}. The kernels $\tilde{c}_1$ and $\tilde{c}_2$ take
into account also the influence of the whole configuration, which is
supposed to be repulsive, see below.

In the present research, we follow the statistical approach, see,
e.g., \cite{Kawasaki,SLMandKE, VlasovScaling, StatisticalDynamics,
MarkovEvolutions}, in which the dynamics of the model are described
by means of that of the corresponding correlation functions obtained
from the following Cauchy problem
$$
\frac{d}{dt} k_t = L^\Delta k_t, \quad k_{t=0} = k_0,
$$
in which the `operator' $L^\Delta$ is related to $L$ in a certain
way. In section \ref{Passing}, we calculate $L^\Delta$ following the
scheme developed in \cite{MarkovEvolutions}. Usually, equations for
$k_t$ are studied in scales of the corresponding Banach spaces.
However, as the structure of $L^\Delta$ obtained below is too
complicated, in this work we do not study this equation, which is
supposed to be done in a separate work. Instead, in section
\ref{Vlasov}  we consider a simplified version obtained by means of
a Vlasov-type scaling procedure developed in, e.g.,
\cite{VlasovScaling}, which is equivalent to passing to the
so-called mesoscopic description. In particular, we informally
obtain the kinetic equation and study its local solutions in an
appropriate Banach space, showing their existence and uniqueness. In
the next section, we introduce necessary notions and technical
tools.

\section{Basic notions and tools}\label{Notions}

In this section we introduce basic notions and tools used for
proving the results in the following sections. We give only a short
description with references to the corresponding sources.

Note that each element $\gamma \in \Gamma$ is at most countable
without finite limiting points. $\Gamma$ is endowed with the vague
topology, which is the weakest topology that makes continuous the
mappings
$$\gamma \rightarrow \suml{x \in \gamma} f(x)$$
for all continuous compactly supported functions $f: \Rd \rightarrow
\mathds{R}$. For the general discussion on configuration spaces we
recommend \cite{SLMandKE, StatisticalDynamics}.

One can also consider the space of all finite configurations
$$\Gamma_0 = \Gamma_0(\Rd) = \Big\{ \eta \in \Gamma: \eta \text{ is finite} \Big\},$$
which can be written down as
$$\Gamma_0 = \bigsqcup\limits_{n=0}^\infty \Gamma^{(n)}$$
where $\Gamma^{(n)} = \Gamma^{(n)}( \Rd ) = \Big\{ \{    x_1, x_2,
... , x_n\} \subset \Rd, x_i \neq x_j \text{ for } i \neq j \Big\} ,
\ n \in \mathds{N}_0$ and each $\Gamma^{(n)}$ is equipped with the
topology based on the Euclidean topology of $\Rd$. Note that
$\Gamma^{(0)} = \{ \emptyset \}$. Therefore $\Gamma_0$ can be
considered either with the topology induced from the vague topology
of $\Gamma$ or with the topology of the disjoint union. These
topologies are different but the corresponding Borel
$\sigma-$algebras are equal. Both $(\Gamma, \mathcal{B} (\Gamma))$
and $(\Gamma_0, \mathcal{B} (\Gamma_0))$ are standard Borel spaces.
Additionally $\Gamma_0 \in \mathcal{B}(\Gamma)$, which implies that
$\mathcal{B}(\Gamma_0)$ is a sub-$\sigma$-field of
$\mathcal{B}(\Gamma)$.

For each  bounded Borel set $\Lambda \subset \Rd$, we define
$p_\Lambda(\gamma) = \gamma_\Lambda = \gamma \cap \Lambda$ and
denote $\Gamma_\Lambda = p_\Lambda(\Gamma)$. Note that
$\Gamma_\Lambda$ is a subset of $\Gamma_0$, that is $$\Gamma_\Lambda
= \bigsqcup\limits_{n=0}^\infty \Gamma_\Lambda^{(n)}, \quad
\text{where }\ \Gamma_\Lambda^{(n)} = \Gamma_\Lambda \cap
\Gamma^{(n)}.$$

We say that a Borel set $A \subset \Gamma_0$ is bounded, if for some $N \in \mathds{N}$ and a bounded $\Lambda \subset \Rd$
$$A \subset \bigsqcup\limits_{n=0}^N \Gamma_\Lambda^{(n)}.$$
It can be shown that  $G: \Gamma_0 \rightarrow \mathds{R}$ is
measurable if and only if, for any $n \in \mathds{N}_0$, there
exists a symmetric measurable function $G^{(n)}: (\Rd)^n \rightarrow
\mathds{R}$ such that $G^{(n)} (x_1, ..., x_n) = G(\{x_1, ..., x_n
\})$, whenever $x_i \neq x_j$ for $i \neq j$. Here we use the
convention that $G^{(0)} = G(\emptyset) \in \mathds{R}$ is just a
constant function.

By $B_{bs}(\Gamma_0)$ we denote the set of all bounded measurable
functions $G: \Gamma_0 \rightarrow \mathds{R}$ having bounded
supports. Then the $K$-transform is defined as follows. For any $G
\in B_{bs}(\Gamma_0)$, $KG: \Gamma \rightarrow \mathds{R}$ is
\begin{equation}\label{KDef}
(KG)(\gamma) = \suml{\eta \Subset \gamma} G(\eta),
\end{equation}
where $\eta \Subset \gamma$ means that $\eta$ is a finite
sub-configuration of $\gamma$,  see \cite{HarmonicAnalysis}.
Obviously, the $K-$transform is linear. It acts to
$\mathcal{F}_{cyl}(\Gamma)$, i.e., to the set of all measurable
cylinder functions $F: \Gamma \rightarrow \mathds{R}$. It is also
invertible with the inverse given by
\begin{equation*}%\label{KInv}
(K^{-1}F)(\eta) = \suml{\xi \subset \eta}(-1)^{|\eta \backslash \xi|}F(\xi).
\end{equation*}
For $G_1, G_2 \in B_{bs}(\Gamma_0)$, it is known that
\begin{equation}\label{KStar}
(KG_1)\cdot(KG_2) = K(G_1 \star G_2),
\end{equation}
where $G_1 \star G_2$ is the 'convolution' given by the formula
\begin{equation}\label{ConvDef}
(G_1 \star G_2)(\eta) = \sum_{\xi \subset \eta} G_1(\xi) \sum_{\zeta \subset \xi} G_2(\eta \backslash \xi \cup \zeta) \in B_{bs}(\Gamma_0).
\end{equation}
Denote
$$e(f, \gamma) = \prodl{x \in \gamma} f(x).$$
For a measurable compactly supported function $f: \Rd \rightarrow \mathds{R}$, the following holds
\begin{equation}\label{KProd}
K\big(e(f, \cdot)\big)(\gamma) = e(1+f, \gamma).
\end{equation}
A probability measure $\mu$  on $(\Gamma,\mathcal{B}(\Gamma))$ is
said to have finite local moments of all orders if for any $n \in
\mathds N$ and a bounded Borel $\Lambda \subset \Rd$,
$$ \intGamN |\gamma_\Lambda|^n \mu (d \gamma) < \infty,$$
where $|\eta|$ stands for the cardinality of $\eta \in \Gamma_0$.
For such a measure $\mu$,  one can define a correlation measure,
$\rho_\mu$, on $(\Gamma_0,\mathcal{B}(\Gamma_0))$ by
$$\intGamN G(\eta) \rho_\mu (d \eta) = \intl{\Gamma} (KG)(\gamma) \mu (d \gamma), \quad G \in B_{bs}(\Gamma_0).$$

By $\lambda$ we denote the Lebesgue-Poisson measure on
$(\Gamma_0,\mathcal{B}(\Gamma_0))$, that is, the correlation measure
for the homogeneous Poisson measure with unit intensity. The
Lebesgue-Poisson measure is uniquely defined by the following formula
\begin{equation}\label{LebPoisInt}
\intGamN G(\eta) \lambda(d\eta) = G^{(0)} + \suml{n=1}^\infty
\frac{1}{n!} \intl{(\Rd)^n} G^{(n)}(x_1, x_2, \dots, x_n) dx_1 dx_2
\cdots dx_n,
\end{equation}
which has to hold for all $G \in  B_{bs}(\Gamma_0)$. The Minlos
lemma (see, e.g.,  \cite[eq. (2.2)]{SLMandKE}), states that
\begin{eqnarray}\label{Minlos}
& \intGamN ... \intGamN G(\eta_1 \cup \eta_2 \cup ... \cup \eta_n) H(\eta_1, \eta_2, ..., \eta_n) \lambda(d\eta_1) \lambda(d\eta_2) ... \lambda(d\eta_n) \nonumber \\
& = \intGamN G(\eta) \sum H(\eta_1, \eta_2, ..., \eta_n) \lambda(d\eta),
\end{eqnarray}
where $n$ is a positive integer, $G:\Gamma_0 \rightarrow
\mathds{R}$, $H: (\Gamma_0)^n \rightarrow \mathds{R}$ are positive
and measurable and the sum is taken over all $n-$part partitions
$(\eta_1, ..., \eta_n)$ of $\eta$, where parts being empty
configurations are also considered.  For $n=2$, we can rewrite
(\ref{Minlos}) in the following form
\begin{equation}\label{MinlosSub}
\intGamN \intGamN G(\eta \cup \xi) H (\eta, \xi) \lambda(d\eta) \lambda(d\xi) = \intGamN G(\eta) \suml{\xi \subset \eta} H(\xi, \eta \bs \xi) \lambda(d\eta).
\end{equation}
By taking 
$$H(\eta_1, \eta_2) = \left\{ \begin{array}{ll} h(x, \eta_2), & \ \eta_1 = \{x\} \\ 0, & |\eta_1| \neq 1 \end{array} \right.$$
and using (\ref{LebPoisInt}) we obtain the following special case of the Minlos lemma
\begin{equation}\label{Minlos2}
\intGamN \intRd G(\eta \cup x) h(x,\eta) dx \lambda(d\eta) = \intGamN \suml{x \in \eta} G(\eta) h(x, \eta \bs x) \lambda(d \eta).
\end{equation}
Analogously, for 
$$H(\eta_1, \eta_2, \eta_3) = \left\{ \begin{array}{ll} h(x, y, \eta_3), & \ \eta_1 = \{x\}, \eta_2 = \{y\} \\ 0, & |\eta_1| \neq 1 \text{ or } |\eta_2| \neq 1, \end{array}  \right.$$
we have
\begin{eqnarray}\label{Minlos3}
& &\frac{1}{2} \intGamN \intRd \intRd G(\eta \cup \{x,y\}) h(x, y, \eta) dx dy \lambda(d\eta) \nonumber \\
& & \quad = \intGamN \suml{\{x, y\} \subset \eta} G(\eta) h(x, y, \eta \bs \{x, y\}) \lambda(d\eta)
\end{eqnarray}

\section{Dynamics of the correlation functions}\label{Passing}
In this section, we follow the approach of \cite{MarkovEvolutions} and obtain the operator $L^\Delta$  corresponding to (\ref{OperatorL}). First, by using the
$K-$transform (\ref{KDef}) we pass to the
quasi-observables and obtain the operator $\hat{L}$. Then by the
Minlos lemma (\ref{Minlos}) we obtain the operator $L^\Delta$.
Recall, that
$$L = L_1 + L_2,$$
where
$$L_1F(\gamma) = \sum_{\{x,y\} \subset \gamma} \  \intRd \tilde{c}_1(x,y;z;\gamma) \Big(F\big(\gamma \bs \{x,y\} \cup z \big) - F(\gamma) \Big) dz$$
and
$$L_2F(\gamma) = \sum_{x \in \gamma} \  \intRd \tilde{c}_2(x;y;\gamma) \Big(F\big(\gamma \backslash x \cup y\big) - F(\gamma) \Big) dy.$$
We assume that
\begin{eqnarray*}
\tilde{c}_1(x,y;z;\gamma) &=& c_1(x,y;z) e(t^{(1)}_z, \gamma \bs \{x,y\} ), \\
\tilde{c}_2(x;y;\gamma) &=& c_2(x;y) e(t^{(2)}_y, \gamma \bs x),
\end{eqnarray*}
with
$$t^{(1)}_z(u) = e^{-\phi_1(z-u)}, \ t^{(2)}_y(u) = e^{-\phi_2(y-u)}.$$
The above $c_1$, $c_2$, $\phi_1$, $\phi_2$ are positive real functions such that
\begin{eqnarray}\label{CoeffAsKTrans}
\tilde{c}_1(x,y;z;\gamma) &=& (KC^1_{x,y;z})(\gamma \backslash \{x,y\}), \nonumber \\
\tilde{c}_2(x;y;\gamma) &=& (KC^2_{x;y})(\gamma \backslash x)
\end{eqnarray}
for some $C^1_{x,y;z}$ and $C^2_{x;y}$. We discuss the form of these functions later in this section. Additionally, we assume that
\begin{eqnarray*}
& & c_1(x,y;z) = c_1(y,x;z), \\
& & \intl{(\Rd)^2} c_1(x_1,x_2;x_3) dx_i dx_j = \cOne \  < \infty, \ i, j = 1, 2, 3, \ i \neq j, \\
& & \intRd c_2(x;y) dx = \intRd c_2(x;y) dy = \cTwo \  < \infty, \\
& & \intRd \phi_1(x) dx = \phiOne \  < \infty, \quad \intRd \phi_2(x) dx = \phiTwo \  < \infty.
\end{eqnarray*}
Suppose that $F = KG$, where $G: \Gamma_0 \rightarrow \mathds{R}$. Then by writing $K\hat{L}G = LF$ we define
\begin{equation}\label{LHatDef}
\hat{L} = K^{-1}LK.
\end{equation}
By the properties of the $K-$transform we derive an explicit formula for $\hat L$.
\begin{proposition}\label{PropLHat}
$\hat{L}$ defined as above has the following form
\begin{eqnarray*}%\label{LHatRes}
\hat{L} G (\eta) &=& \int\limits_{\mathds{R}^d} \sum\limits_{\{x,y\} \subset \eta} \Big[C^1_{x,y;z} \star  H^1_{x,y;z}\Big] (\eta \backslash \{x,y\}) dz \nonumber \\
 &+& \intRd \sum\limits_{x \in \eta} \Big[C^2_{x;y} \star  H^2_{x;y}\Big] (\eta \backslash x) dy,
\end{eqnarray*}
where
\begin{eqnarray}\label{ConstH}
H^1_{x,y;z}(\eta) &=& G(\eta \cup z) - G(\eta \cup x) -G(\eta \cup y) -G(\eta \cup \{x,y\}), \nonumber \\
H^2_{x;y}(\eta) &=& G(\eta \cup y) - G(\eta \cup x).
\end{eqnarray}
\end{proposition}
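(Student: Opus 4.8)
The plan is to compute $\hat L = K^{-1}LK$ by applying $L$ directly to $F = KG$ and then reorganising the outcome into the form $K(\hat L G)$, from which $\hat L G$ can be read off. Since $L = L_1 + L_2$ and $K^{-1}$ is linear, I would treat the two summands separately. The jump term $L_2$ is exactly the Kawasaki generator of \cite{Kawasaki}, so the genuinely new computation is the one for the coalescence term $L_1$; the argument for $L_2$ is the same but simpler.

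Fix a pair $\{x,y\} \subset \gamma$ and write $\gamma' = \gamma \bs \{x,y\}$. The first step is to split each finite subconfiguration of $\gamma$, and of $\gamma' \cup z$, according to which of the points $x,y$ (respectively $z$) it contains. Since for a.e.\ $z$ none of $x,y,z$ lies in $\gamma'$, the definition (\ref{KDef}) gives
\[
(KG)(\gamma) = \suml{\eta \Subset \gamma'}\big[G(\eta) + G(\eta \cup x) + G(\eta \cup y) + G(\eta \cup \{x,y\})\big]
\]
and $(KG)(\gamma' \cup z) = \suml{\eta \Subset \gamma'}[G(\eta) + G(\eta \cup z)]$. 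Subtracting, the terms $G(\eta)$ cancel and the bracket collapses to $H^1_{x,y;z}(\eta)$, whence
\[
F(\gamma \bs \{x,y\} \cup z) - F(\gamma) = \big(K H^1_{x,y;z}\big)(\gamma').
\]

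Next I would insert the representation $\tilde c_1(x,y;z;\gamma) = (KC^1_{x,y;z})(\gamma')$ from (\ref{CoeffAsKTrans}) and apply the product rule (\ref{KStar}) to the integrand of the $z$-integral, obtaining $(KC^1_{x,y;z})(\gamma')\cdot(KH^1_{x,y;z})(\gamma') = K\big(C^1_{x,y;z} \star H^1_{x,y;z}\big)(\gamma')$, so that
\[
L_1 F(\gamma) = \suml{\{x,y\} \subset \gamma} \intRd K\big(C^1_{x,y;z} \star H^1_{x,y;z}\big)(\gamma \bs \{x,y\})\, dz.
\]
The decisive step is then the combinatorial re-indexing identity
\[
\suml{\{x,y\} \subset \gamma}\ \suml{\zeta \Subset \gamma \bs \{x,y\}} \Phi(x,y,\zeta) = \suml{\eta \Subset \gamma}\ \suml{\{x,y\} \subset \eta} \Phi\big(x,y,\eta \bs \{x,y\}\big),
\]
valid for any $\Phi$ by the bijection $(\{x,y\},\zeta) \leftrightarrow \eta = \{x,y\} \cup \zeta$. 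Expanding the inner $K$-transform via (\ref{KDef}) and applying this identity rewrites $L_1 F$ as $\suml{\eta \Subset \gamma}(\hat L_1 G)(\eta) = K(\hat L_1 G)(\gamma)$, where $\hat L_1 G$ is exactly the first summand of the asserted formula. The same reasoning, using the single-point decomposition with $x$ and the single-point analogue of the re-indexing identity, yields $L_2 F = K(\hat L_2 G)$ with the second summand.

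I expect the main obstacle to be bookkeeping rather than anything conceptual: one must track the cancellation of the $G(\eta)$ terms correctly and, above all, justify the exchange between the sum over pairs $\{x,y\} \subset \gamma$ and the sum over finite $\eta \Subset \gamma$ in the re-indexing step. Some care is also needed to keep all manipulations legitimate on $B_{bs}(\Gamma_0)$ — in particular that the relevant sums are finite for $G$ of bounded support — and to record that the exceptional set where $z \in \gamma'$ is Lebesgue-null and therefore irrelevant to the $z$-integration.
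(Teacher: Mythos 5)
Your proposal is correct and follows essentially the same route as the paper's proof: the same splitting of $KG(\xi\cup z)$ and $KG(\xi\cup\{x,y\})$ over sub-configurations according to membership of $x,y,z$ (with the same observation that the exceptional set $z\in\gamma\bs\{x,y\}$ is Lebesgue-null), the same cancellation of the $G(\eta)$ terms producing $H^1_{x,y;z}$ and $H^2_{x;y}$, and the same use of (\ref{CoeffAsKTrans}) together with the product rule (\ref{KStar}) to reach $L_1F(\gamma)=\sum_{\{x,y\}\subset\gamma}\int_{\mathds{R}^d} K\big[C^1_{x,y;z}\star H^1_{x,y;z}\big](\gamma\bs\{x,y\})\,dz$. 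The only cosmetic difference is the final step: you re-index the unsigned sum via the bijection $(\{x,y\},\zeta)\leftrightarrow\eta=\{x,y\}\cup\zeta$ to exhibit $L_1F$ directly as $K(\hat L_1 G)$, whereas the paper applies the explicit alternating-sum formula for $K^{-1}$ and performs the same re-indexing inside the signed double sum to collapse $K^{-1}K=\mathrm{id}$ --- two equivalent ways of exploiting the same combinatorial identity.
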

The next step is to pass with the action of the operator $\hat{L}$ to the correlation functions, introducing $L^\Delta$. We can define the latter by the pairing $\langle\langle \hat{L}G, k \rangle\rangle = \langle\langle G, L^\Delta k \rangle\rangle$, that is
\begin{equation}\label{LTriDef}
\int\limits_{\Gamma_0} (\hat{L}G)(\eta) k(\eta) \lambda (d\eta) = \int\limits_{\Gamma_0} G(\eta) (L^\Delta k)(\eta) \lambda (d\eta)
\end{equation}
Let us consider the integral from the left hand side of equation (\ref{LTriDef}). Using the Minlos lemma (\ref{Minlos}) several times, we can transform it so that we can obtain $L^\Delta$.
\begin{proposition}\label{PropLTriangle}
$L^\Delta$ defined as above is of the form
$$L^{\Delta} = L^{\Delta}_1 + L^{\Delta}_2,$$
where
\begin{eqnarray*}
 L^{\Delta}_1 k(\eta) &=& \frac{1}{2} \int\limits_{(\mathds{R}^d)^2} \ \int\limits_{\Gamma_0} \sum\limits_{z \in \eta} c_1(x,y;z) k(\eta \backslash z \cup \xi \cup \{x,y\}) \\
 &\times & e(t^{(1)}_z - 1, \xi) e(t^{(1)}_z, \eta \bs z)\lambda(d\xi)  dx dy \\
&-& \frac{1}{2} \int\limits_{(\mathds{R}^d)^2} \ \int\limits_{\Gamma_0 } \sum\limits_{x \in \eta} c_1(x,y;z) k(\eta \cup \xi \cup y) \\
&\times & e(t^{(1)}_z - 1, \xi) e(t^{(1)}_z, \eta \bs x) \lambda(d\xi) dy dz \\
&-& \frac{1}{2} \int\limits_{(\mathds{R}^d)^2} \ \int\limits_{\Gamma_0 } \sum\limits_{y \in \eta} c_1(x,y;z) k(\eta \cup \xi \cup x) \\
&\times & e(t^{(1)}_z - 1, \xi) e(t^{(1)}_z, \eta \bs y) \lambda(d\xi) dx dz \\
&-& \int\limits_{\mathds{R}^d} \ \int\limits_{\Gamma_0 } \sum\limits_{\{x,y\} \subset \eta} c_1(x,y;z) k(\eta \cup \xi) \\
&\times & e(t^{(1)}_z - 1, \xi) e(t^{(1)}_z, \eta \bs \{x,y\})  \lambda(d\xi) dz
\end{eqnarray*}
and
\begin{eqnarray*}
L^{\Delta}_2 k(\eta) &=& \intRd \intGamN \sum\limits_{y \in \eta} k(\eta \backslash y \cup \xi \cup x) c_2(x;y) \\
&\times & e(t^{(2)}_y - 1, \xi) e(t^{(2)}_y - 1, \eta \bs y)  \lambda(d\xi) dx \\
&-& \intRd \intGamN k(\eta \cup \xi) \sum\limits_{x \in \eta} c_2(x;y) \\
&\times & \prodl{u \in \xi} e(t^{(2)}_y - 1, \xi) e(t^{(2)}_y - 1, \eta \bs x) \lambda(d\xi) dy.
\end{eqnarray*}
\end{proposition}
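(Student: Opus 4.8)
The plan is to start from the left-hand side of (\ref{LTriDef}), insert the explicit expression for $\hat{L}G$ from Proposition \ref{PropLHat}, and then transport every occurrence of $G$ onto the single argument $\eta$ by repeated use of the Minlos lemma, reading $L^{\Delta}k$ off the resulting pairing against $G$. A preliminary observation makes the exponential factors transparent: since $K(e(f,\cdot))(\gamma)=e(1+f,\gamma)$ by (\ref{KProd}), the representation (\ref{CoeffAsKTrans}) forces
\[
C^1_{x,y;z}(\eta)=c_1(x,y;z)\,e(t^{(1)}_z-1,\eta),\qquad C^2_{x;y}(\eta)=c_2(x;y)\,e(t^{(2)}_y-1,\eta).
\]
Thus every $\star$-convolution appearing in $\hat{L}G$ is a convolution of an ``exponential'' $e(t-1,\cdot)$ with one of the finite differences $H^1_{x,y;z}$, $H^2_{x;y}$ from (\ref{ConstH}).

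Two reductions do the bulk of the work. First, the outer sums over sub-configurations are removed: applying (\ref{Minlos3}) to $\suml{\{x,y\}\subset\eta}$ in the coalescence term and (\ref{Minlos2}) to $\suml{x\in\eta}$ in the jump term turns each lattice sum into an integral over $\Rd$ (resp.\ $(\Rd)^2$) and simultaneously adjoins the detached points to the argument of $k$, at the cost of the factor $\tfrac12$ recorded in (\ref{Minlos3}). Second, each pairing of a $\star$-convolution against $k$ is unfolded. Writing the convolution (\ref{ConvDef}) as a sum over ordered three-part partitions,
\[
(G_1\star G_2)(\eta)=\suml{A\sqcup B\sqcup C=\eta}G_1(A\cup B)\,G_2(A\cup C),
\]
and applying (\ref{Minlos}) with $n=3$ gives the identity
\[
\intGamN (G_1\star G_2)(\eta)\,k(\eta)\,\lambda(d\eta)
=\intGamN\intGamN\intGamN G_1(\eta_1\cup\eta_2)\,G_2(\eta_1\cup\eta_3)\,
k(\eta_1\cup\eta_2\cup\eta_3)\,\lambda(d\eta_1)\,\lambda(d\eta_2)\,\lambda(d\eta_3),
\]
in which $\eta_1$ is the ``overlap'' carried by both factors, $\eta_2$ is carried by $G_1$ only, and $\eta_3$ by $G_2$ only.

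With $G_1=C^{\bullet}$ and $G_2=H^{\bullet}$ this is the crux. The variable $\eta_2$ occurs only through $e(t-1,\eta_2)$ and through $k$, and is kept as the integral $\intGamN e(t-1,\xi)\,k(\cdots\cup\xi)\,\lambda(d\xi)$, i.e.\ the factor $e(t^{(\cdot)}-1,\xi)$ of the statement. The overlap $\eta_1$ carries $e(t-1,\eta_1)$, while $H^{\bullet}(\eta_1\cup\eta_3)$ and $k$ depend on $\eta_1,\eta_3$ only through $\sigma:=\eta_1\cup\eta_3$; recombining them by (\ref{MinlosSub}) and using the binomial identity
\[
\suml{\eta_1\subset\sigma}e(t-1,\eta_1)=e(t,\sigma)
\]
produces precisely the weight $e(t^{(\cdot)},\sigma)$ on the recombined configuration. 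Finally, for each summand of $H^1_{x,y;z}$ and $H^2_{x;y}$ the adjoined point is moved off $G$ and onto $k$ by one further application of (\ref{Minlos2}) (for $G(\sigma\cup z)$, $G(\sigma\cup x)$, $G(\sigma\cup y)$) or of (\ref{Minlos3}) (for $G(\sigma\cup\{x,y\})$); this turns the remaining integral over $\Rd$ into a sum $\suml{z\in\eta}$, $\suml{x\in\eta}$, $\suml{y\in\eta}$ or $\suml{\{x,y\}\subset\eta}$, replaces $\sigma$ by $\eta$ with the transferred point removed in the exponential weight, and yields the $k$-arguments $\eta\bs z\cup\xi\cup\{x,y\}$, $\eta\cup\xi\cup y$, $\eta\cup\xi\cup x$, $\eta\cup\xi$ by elementary set identities such as $(\sigma\bs x)\cup\{x,y\}=\sigma\cup y$ and $(\sigma\bs\{x,y\})\cup\{x,y\}=\sigma$. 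Collecting the four coalescence contributions and the two jump contributions and isolating $G(\eta)$ then exhibits $L^{\Delta}_1$ and $L^{\Delta}_2$.

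The main obstacle is bookkeeping rather than conceptual: one must track, through the triple unfolding and the repeated relabelings, which of the two exponential weights ($e(t-1,\cdot)$ on the ``$C$-only'' variable $\xi$, $e(t,\cdot)$ on the recombined $\sigma$) attaches to which configuration, keep the $\tfrac12$ from (\ref{Minlos3}) consistent across the four coalescence terms, and verify at each step that the integrands remain in the class for which (\ref{Minlos}), (\ref{MinlosSub}), (\ref{Minlos2}), (\ref{Minlos3}) and the interchanges of integration are legitimate; this last point is where the finiteness assumptions on $c_1,c_2,\phi_1,\phi_2$ and the support properties of $G\in B_{bs}(\Gamma_0)$ enter.
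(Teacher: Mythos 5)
Your plan is correct and follows essentially the same route as the paper's proof: explicit forms $C^1_{x,y;z}=c_1(x,y;z)\,e(t^{(1)}_z-1,\cdot)$ and $C^2_{x;y}=c_2(x;y)\,e(t^{(2)}_y-1,\cdot)$ obtained via (\ref{KProd}), outer sums removed by (\ref{Minlos2})/(\ref{Minlos3}), the $\star$-convolution unfolded into a triple Lebesgue--Poisson integral (the paper reaches your $n=3$ identity by applying (\ref{MinlosSub}) twice to (\ref{ConvDef})), recombination via (\ref{MinlosSub}) together with $\sum_{\zeta\subset\sigma}e(t-1,\zeta)=e(t,\sigma)$, and final transfer of the adjoined points onto $k$ by (\ref{Minlos2}) and (\ref{Minlos3}). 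The only cosmetic differences are ordering: the paper substitutes the explicit $C$'s after deriving the general intermediate formula (\ref{LTriRes}) and then justifies the factorization $e(t-1,\xi\cup\zeta)=e(t-1,\xi)\,e(t-1,\zeta)$ by an explicit null-set argument ($\lambda(\{\xi\in\Gamma_0:\xi\cap\eta\neq\emptyset\})=0$), whereas you factor early inside the partition sum where disjointness of the parts is automatic.
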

\begin{proof}[\bfseries{Proof of Proposition \ref{PropLHat}}]
First let us rewrite the operator $L$ in a more convenient form. Using (\ref{CoeffAsKTrans}) and recalling that any configuration treated as a subset of $R^d$ is Lebesgue measure-zero as it is countable, we have
\begin{eqnarray*}
L_1F(\gamma) &=& \sum_{\{x,y\} \subset \gamma} \  \int\limits_{\mathds{R}^d \backslash \gamma} \bigg(KC^1_{x,y;z}(\cdot)\Big[KG\big(\cdot \cup z\big) \\
 &&- KG\big(\cdot \cup \{x,y\}\big)\Big]\bigg)\big(\gamma \backslash \{ x,y\}\big)dz.
\end{eqnarray*}
Observe that for any $\xi \in \Gamma, \  x,y,z \notin \xi$ we have
$$KG(\xi \cup z) = \sum_{\eta \subset \subset \xi \cup z} G(\eta) = \sum_{\eta \subset \subset \xi}\Big[ G(\eta) + G(\eta \cup z)\Big] = K\Big[ G(\cdot) + G(\cdot \cup z)\Big](\xi)$$
and analogously
$$KG(\xi \cup \{x,y\}) = K \Big[ G(\cdot) + G(\cdot \cup x) + G(\cdot \cup y) + G(\cdot \cup \{ x,y \}) \Big](\xi).$$
Using linearity of the $K-$transform and above observations, we obtain
\begin{eqnarray*}
L_1F(\gamma) &=& \sum_{\{x,y\} \subset \gamma} \  \int\limits_{\mathds{R}^d} \bigg(KC^1_{x,y;z}(\cdot) K \Big[G(\cdot \cup z) - G(\cdot \cup x)  \\
 & & -G(\cdot \cup y) -G(\cdot \cup \{x,y\})\Big](\cdot)\bigg)(\gamma \backslash \{ x,y\})dz.
\end{eqnarray*}
Considering the second part of the operator, we have
\begin{eqnarray*}
L_2F(\gamma) &=& \sum\limits_{x \in \gamma} \  \intRd KC^2_{x;y}(\gamma \backslash x) \Big[KG(\gamma \backslash x \cup y) - KG(\gamma) \Big] dy \\
&=& \sum\limits_{x \in \gamma} \  \intRd \bigg( KC^2_{x;y}(\cdot) K \Big[G(\cdot \cup y) - G(\cdot \cup x) \Big] (\cdot) \bigg) (\gamma \backslash x) dy.
\end{eqnarray*}
Using notion (\ref{ConstH}) and property (\ref{KStar}) of the product of $K-$transforms, we derive
$$L_1F(\gamma) = \sum_{\{x,y\} \subset \gamma} \  \intRd \  K \Big[C^1_{x,y;z} \star  H^1_{x,y;z}\Big](\gamma \backslash \{ x,y\})dz,$$
$$L_2F(\gamma) = \sum\limits_{x \in \gamma} \  \intRd  K \Big[C^2_{x;y} \star  H^2_{x;y}\Big](\gamma \backslash x) dy.$$
Therefore
\begin{eqnarray*}
LF(\gamma) &=& \sum_{\{x,y\} \subset \gamma} \  \intRd \  K \Big[C^1_{x,y;z} \star  H^1_{x,y;z}\Big](\gamma \backslash \{ x,y\})dz \\
 &+& \ \sum\limits_{x \in \gamma} \  \intRd  K \Big[C^2_{x;y} \star  H^2_{x;y}\Big](\gamma \backslash x) dy.
\end{eqnarray*}
Recalling the definition (\ref{LHatDef}) of the operator $\hat{L}$ and denoting
$$\hat{L}_1 G(\eta) = K^{-1} L_1 F (\eta), \quad \hat{L}_2 G(\eta) = K^{-1} L_2 F (\eta)$$
we obtain
\begin{eqnarray*}
\hat{L}_1 G(\eta) &=& \sum\limits_{\xi \subset \eta} (-1)^{|\eta \backslash \xi|} \sum\limits_{\{x,y\} \subset \xi} \  \int\limits_{\mathds{R}^d} \  K \Big[C^1_{x,y;z} \star  H^1_{x,y;z}\Big](\xi \backslash \{ x,y\})dz \\
&=& \int\limits_{\mathds{R}^d} \sum\limits_{\{x,y\} \subset \eta} \ \sum\limits_{\xi \subset \eta \backslash {\{x,y\}}} (-1)^{|\eta \backslash {\{x,y\} \backslash \xi|}} K \Big[C^1_{x,y;z} \star  H^1_{x,y;z}\Big](\xi) dz \\
&=& \int\limits_{\mathds{R}^d} \sum\limits_{\{x,y\} \subset \eta} K^{-1} K \Big[C^1_{x,y;z} \star  H^1_{x,y;z}\Big] (\eta \backslash \{x,y\}) dz \\
&=& \int\limits_{\mathds{R}^d} \sum\limits_{\{x,y\} \subset \eta} \Big[C^1_{x,y;z} \star  H^1_{x,y;z}\Big] (\eta \backslash \{x,y\}) dz
\end{eqnarray*}
and analogously
\begin{eqnarray*}
\hat{L}_2 G(\eta) &=& \sum\limits_{\xi \subset \eta} (-1)^{|\eta \backslash \xi|} \sum\limits_{x \in \xi} \  \intRd  K \Big[C^2_{x;y} \star  H^2_{x;y}\Big](\xi \backslash x) dy \\
&=& \intRd \sum\limits_{x \in \eta} \ \sum\limits_{\xi \subset \eta \backslash x} (-1)^{|\eta \backslash x \backslash \xi|} K \Big[C^2_{x;y} \star  H^2_{x;y}\Big](\xi) dy \\
&=& \intRd \sum\limits_{x \in \eta} K^{-1} K \Big[C^2_{x;y} \star  H^2_{x;y}\Big] (\eta \backslash x) dy \\
&=& \intRd \sum\limits_{x \in \eta} \Big[C^2_{x;y} \star  H^2_{x;y}\Big] (\eta \backslash x) dy.
\end{eqnarray*}
Therefore
\begin{eqnarray*}
\hat{L} G (\eta) &=& \int\limits_{\mathds{R}^d} \sum\limits_{\{x,y\} \subset \eta} \Big[C^1_{x,y;z} \star  H^1_{x,y;z}\Big] (\eta \backslash \{x,y\}) dz \\
&+& \intRd \sum\limits_{x \in \eta} \Big[C^2_{x;y} \star  H^2_{x;y}\Big] (\eta \backslash x) dy.
\end{eqnarray*}
\end{proof}
\begin{proof}[\bfseries{Proof of Proposition \ref{PropLTriangle}}]
Using the special case (\ref{Minlos3}) of the Minlos \\ lemma and proposition (\ref{PropLHat}) we have
\begin{eqnarray*}
&& \int\limits_{\Gamma_0}(\hat{L}_1 G)(\eta) k(\eta) \lambda (d\eta) \\
&& =\int\limits_{\Gamma_0} \int\limits_{\mathds{R}^d} \sum\limits_{\{x,y\} \subset \eta} \Big[C^1_{x,y;z} \star  H^1_{x,y;z}\Big] (\eta \backslash \{x,y\}) k(\eta) dz \lambda (d\eta) \\
&& = \frac{1}{2} \int\limits_{(\mathds{R}^d)^3}  \int\limits_{\Gamma_0} \Big[C^1_{x,y;z} \star  H^1_{x,y;z}\Big] (\eta) k(\eta \cup \{x,y\}) \lambda(d\eta) dx dy dz.
\end{eqnarray*}
Recalling the definition (\ref{ConvDef}) of the convolution $\star$ and using the Minlos lemma in the form (\ref{MinlosSub}) twice, we obtain
\begin{eqnarray*}
\int\limits_{\Gamma_0}(\hat{L}_1 G)(\eta) k(\eta) \lambda (d\eta) &=& \frac{1}{2} \int\limits_{(\mathds{R}^d)^3} \intGamN \sum\limits_{\xi \subset \eta} C^1_{x,y;z}(\xi) \sum\limits_{\zeta \subset \xi} H^1_{x,y;z}(\eta \backslash \xi \cup \zeta) \\
&\times & k(\eta \cup \{x, y\}) \lambda(d\eta) dx dy dz \\
&=& \frac{1}{2} \int\limits_{(\mathds{R}^d)^3} \intGamN \intGamN C^1_{x,y;z}(\xi) \sum\limits_{\zeta \subset \xi} H^1_{x,y;z}(\eta \cup \zeta) \\
& \times & k(\eta \cup \xi \cup \{x, y\}) \lambda(d\eta) \lambda(d\xi) dx dy dz \\
&=& \frac{1}{2} \int\limits_{(\mathds{R}^d)^3}  \int\limits_{\Gamma_0} \int\limits_{\Gamma_0} \int\limits_{\Gamma_0} C^1_{x,y;z}(\xi \cup \zeta) H^1_{x,y;z}(\eta \cup \zeta) \\
& \times & k(\eta \cup \xi \cup \zeta \cup \{x,y\}) \lambda(d\eta) \lambda(d\xi) \lambda(d\zeta) dx dy dz.
\end{eqnarray*}
Using again the Minlos lemma (\ref{MinlosSub}), but in the opposite direction, we have
\begin{eqnarray*}
\int\limits_{\Gamma_0}(\hat{L}_1 G)(\eta) k(\eta) \lambda (d\eta) &=& \frac{1}{2} \int\limits_{(\mathds{R}^d)^3}  \int\limits_{\Gamma_0} \int\limits_{\Gamma_0} \sum\limits_{\zeta \subset \eta} C^1_{x,y;z}(\xi \cup \zeta) H^1_{x,y;z}(\eta) \\
&\times & k(\eta \cup \xi \cup \{x,y\}) \lambda(d\eta) \lambda(d\xi) dx dy dz \\
&=& \frac{1}{2} \int\limits_{(\mathds{R}^d)^3}  \int\limits_{\Gamma_0} H^1_{x,y;z}(\eta) \Big[ \int\limits_{\Gamma_0} k(\eta \cup \xi \cup \{x,y\})  \\
& \times & \sum\limits_{\zeta \subset \eta} C^1_{x,y;z}(\xi \cup \zeta) \lambda(d\xi) \Big] \lambda(d\eta) dx dy dz.
\end{eqnarray*}
Let us rewrite above using the definition (\ref{ConstH}) of $H^1_{x,y;z}(\eta)$.
\begin{eqnarray*}
&& \int\limits_{\Gamma_0}(\hat{L}_1 G)(\eta) k(\eta) \lambda (d\eta) \\
&& = \frac{1}{2} \int\limits_{(\mathds{R}^d)^3}  \int\limits_{\Gamma_0} \Big[ G(\eta \cup z) - G(\eta \cup x) -G(\eta \cup y) -G(\eta \cup \{x,y\})\Big] \\
&& \times \Big[ \int\limits_{\Gamma_0} k(\eta \cup \xi \cup \{x,y\}) \sum\limits_{\zeta \subset \eta} C^1_{x,y;z}(\xi \cup \zeta) \lambda(d\xi) \Big] \lambda(d\eta) dx dy dz
\end{eqnarray*}
Using the special cases (\ref{Minlos2}) and (\ref{Minlos3}) of the Minlos lemma we obtain
\begin{eqnarray*}
&& \int\limits_{\Gamma_0}(\hat{L}_1 G)(\eta) k(\eta) \lambda (d\eta) \\
&& = \intGamN G(\eta) \Big[ \frac{1}{2} \int\limits_{(\mathds{R}^d)^2} \int\limits_{\Gamma_0} \sum\limits_{z \in \eta} k(\eta \backslash z \cup \xi \cup \{x,y\}) \sum\limits_{\zeta \subset \eta \backslash z} C^1_{x,y;z}(\xi \cup \zeta) \lambda(d\xi)  dx dy \\
&& - \frac{1}{2} \int\limits_{(\mathds{R}^d)^2} \int\limits_{\Gamma_0} \sum\limits_{x \in \eta} k(\eta \cup \xi \cup y) \sum\limits_{\zeta \subset \eta \backslash x} C^1_{x,y;z}(\xi \cup \zeta) \lambda(d\xi) dy dz \\
&& - \frac{1}{2} \int\limits_{(\mathds{R}^d)^2}  \int\limits_{\Gamma_0} \sum\limits_{y \in \eta} k(\eta \cup \xi \cup x) \sum\limits_{\zeta \subset \eta \backslash y} C^1_{x,y;z}(\xi \cup \zeta) \lambda(d\xi) dx dz \\
&& - \int\limits_{\mathds{R}^d}  \int\limits_{\Gamma_0} \sum\limits_{\{x,y\} \subset \eta} k(\eta \cup \xi) \sum\limits_{\zeta \subset \eta \backslash \{x,y\}} C^1_{x,y;z}(\xi \cup \zeta) \lambda(d\xi) dz \Big] \lambda(d\eta).
\end{eqnarray*}
Employing the same technique to the second part of the operator $\hat{L}$, we derive
\begin{eqnarray*}
&&\int\limits_{\Gamma_0}(\hat{L}_2 G)(\eta) k(\eta) \lambda (d\eta) \\
&& = \int\limits_{(\mathds{R}^d)^2} \int\limits_{\Gamma_0} \int\limits_{\Gamma_0} \sum\limits_{\zeta \subset \eta} C^2_{x;y}(\xi \cup \zeta) H^2_{x;y}(\eta) k(\eta \cup \xi \cup x)  \lambda(d\eta) \lambda(d\xi) dx dy \\
&&= \intGamN G(\eta) \bigg[ \intRd \intGamN \sum\limits_{y \in \eta} k(\eta \backslash y \cup \xi \cup x) \sum\limits_{\zeta \subset \eta \backslash y} C^2_{x;y}(\xi \cup \zeta) \lambda(d\xi) dx \\
&&- \intRd \intGamN k(\eta \cup \xi) \sum\limits_{x \in \eta} \sum\limits_{\zeta \subset \eta \backslash x} C^2_{x;y}(\xi \cup \zeta) \lambda(d\xi) dy \bigg] \lambda(d\eta)
\end{eqnarray*}
Therefore, we obtain
\begin{eqnarray}\label{LTriRes}
L^{\Delta} k(\eta) &=& \ \ \frac{1}{2} \int\limits_{(\mathds{R}^d)^2} \int\limits_{\Gamma_0} \sum\limits_{z \in \eta} k(\eta \backslash z \cup \xi \cup \{x,y\}) \sum\limits_{\zeta \subset \eta \backslash z} C^1_{x,y;z}(\xi \cup \zeta) \lambda(d\xi)  dx dy \nonumber \\
&- & \frac{1}{2} \int\limits_{(\mathds{R}^d)^2} \int\limits_{\Gamma_0} \sum\limits_{x \in \eta} k(\eta \cup \xi \cup y) \sum\limits_{\zeta \subset \eta \backslash x} C^1_{x,y;z}(\xi \cup \zeta) \lambda(d\xi) dy dz \nonumber \\
& - & \frac{1}{2} \int\limits_{(\mathds{R}^d)^2} \int\limits_{\Gamma_0} \sum\limits_{y \in \eta} k(\eta \cup \xi \cup x) \sum\limits_{\zeta \subset \eta \backslash y} C^1_{x,y;z}(\xi \cup \zeta) \lambda(d\xi) dx dz \nonumber \\
& - & \int\limits_{\mathds{R}^d}  \int\limits_{\Gamma_0} \sum\limits_{\{x,y\} \subset \eta} k(\eta \cup \xi) \sum\limits_{\zeta \subset \eta \backslash \{x,y\}} C^1_{x,y;z}(\xi \cup \zeta) \lambda(d\xi) dz \nonumber \\
& + & \intRd \intGamN \sum\limits_{y \in \eta} k(\eta \backslash y \cup \xi \cup x) \sum\limits_{\zeta \subset \eta \backslash y} C^2_{x;y}(\xi \cup \zeta) \lambda(d\xi) dx \nonumber \\
& - & \intRd \intGamN \sum\limits_{x \in \eta} k(\eta \cup \xi) \sum\limits_{\zeta \subset \eta \backslash x} C^2_{x;y}(\xi \cup \zeta) \lambda(d\xi) dy
\end{eqnarray}
Note that so far we have not used any assumption about coefficients $\tilde{c}_1$ and $\tilde{c}_2$ but that they can be written as results of action of the $K-$transform on corresponding functions $C^1_{x,y;z}$ and $C^2_{x;y}$. Let us calculate explicit forms of these functions. Recall that
$$\tilde{c}_1(x,y;z;\gamma) = c_1(x,y;z) \prodl{u \in \gamma \backslash \{x,y\}} e^{-\phi_1(z-u)},$$
$$\tilde{c}_2(x;y;\gamma) = c_2(x;y) \prodl{u \in \gamma \backslash x} e^{-\phi_2(y-u)}.$$
We have
$$KC^1_{x,y;z} = c_1(x,y;z) e(t^{(1)}_z, \cdot),$$
that is
\begin{eqnarray*}
C^1_{x,y;z} &=& K^{-1} c_1(x,y;z)e(1 + t^{(1)}_z -1, \cdot) = c_1(x,y;z) K^{-1} \suml{\xi \subset \cdot} e(t^{(1)}_z - 1, \xi) \\
&=& c_1(x,y;z) K^{-1} K  e(t^{(1)}_z - 1, \cdot) = c_1(x,y;z) e(t^{(1)}_z - 1, \cdot).
\end{eqnarray*}
Therefore
\begin{equation}\label{C1Form}
C^1_{x,y;z}(\eta) = c_1(x,y;z) e(t^{(1)}_z - 1, \eta).
\end{equation}
Analogously we can derive
\begin{equation}\label{C2Form}
C^2_{x;y}(\eta) = c_2(x;y) e(t^{(2)}_y - 1, \eta).
\end{equation}
Using the above, we can rewrite the operator $L^\Delta$. For convenience let us denote the part of it corresponding to the coalescence, that is the first four terms of (\ref{LTriRes}), as $L^\Delta_1$ and the part corresponding to the jumps, that is the last two terms of (\ref{LTriRes}), as $L^\Delta_2$. Substituting (\ref{C1Form}) we derive
\begin{eqnarray*}
&&L^{\Delta}_1 k(\eta) = \frac{1}{2} \int\limits_{(\mathds{R}^d)^2} \int\limits_{\Gamma_0} \sum\limits_{z \in \eta} k(\eta \backslash z \cup \xi \cup \{x,y\}) \sum\limits_{\zeta \subset \eta \backslash z} c_1(x,y;z) \\
&& \times \   e(t^{(1)}_z - 1, \xi \cup \zeta)  \lambda(d\xi)  dx dy \\
&& -  \frac{1}{2} \int\limits_{(\mathds{R}^d)^2} \int\limits_{\Gamma_0} \sum\limits_{x \in \eta} k(\eta \cup \xi \cup y) \sum\limits_{\zeta \subset \eta \backslash x} c_1(x,y;z) e(t^{(1)}_z - 1, \xi \cup \zeta) \lambda(d\xi) dy dz \\
&& -  \frac{1}{2} \int\limits_{(\mathds{R}^d)^2} \int\limits_{\Gamma_0} \sum\limits_{y \in \eta} k(\eta \cup \xi \cup x) \sum\limits_{\zeta \subset \eta \backslash y} c_1(x,y;z) e(t^{(1)}_z - 1, \xi \cup \zeta) \lambda(d\xi) dx dz \\
&& -  \int\limits_{\mathds{R}^d}  \int\limits_{\Gamma_0} \sum\limits_{\{x,y\} \subset \eta} k(\eta \cup \xi) \sum\limits_{\zeta \subset \eta \backslash \{x,y\}} c_1(x,y;z) e(t^{(1)}_z - 1, \xi \cup \zeta) \lambda(d\xi) dz
\end{eqnarray*}
and analogously using (\ref{C2Form}) we obtain
\begin{eqnarray*}
L^{\Delta}_2 k(\eta) &=& \intRd \intGamN \sum\limits_{y \in \eta} k(\eta \backslash y \cup \xi \cup x) \sum\limits_{\zeta \subset \eta \backslash y} c_2(x;y) e(t^{(2)}_y - 1, \xi \cup \zeta)  \lambda(d\xi) dx \\
& -& \intRd \intGamN k(\eta \cup \xi) \sum\limits_{x \in \eta} \sum\limits_{\zeta \subset \eta \backslash x} c_2(x;y) e(t^{(2)}_y - 1, \xi \cup \zeta) \lambda(d\xi) dy.
\end{eqnarray*}
Consider the first component of $L^{\Delta}_1$ and denote it as
\begin{eqnarray*}
L^{\Delta}_{11} k(\eta) &=& \frac{1}{2} \int\limits_{(\mathds{R}^d)^2} \int\limits_{\Gamma_0} \sum\limits_{z \in \eta} k(\eta \backslash z \cup \xi \cup \{x,y\}) \\
& &\sum\limits_{\zeta \subset \eta \backslash z} c_1(x,y;z) e(t^{(1)}_z - 1, \xi \cup \zeta) \lambda(d\xi)  dx dy.
\end{eqnarray*}
Next, for a given $\eta$ let us introduce $C(\eta) = \{\xi \in \Gamma_0: \xi \cap \eta \neq \emptyset \}$. Then, because any configuration treated as a measurable subset of $\Rd$ is of Lebesgue measure 0 and the empty configuration does not belong to $C(\eta)$ for any $\eta \in\Gamma_0$, we have $\lambda(C(\eta)) = 0$ for every $\eta \in\Gamma_0$. Indeed, using the characterization (\ref{LebPoisInt}) of the integral w.r.t. the Lebesgue-Poisson measure we obtain
$$\lambda(C(\eta)) = \intGamN I_{C(\eta)}(\xi) \lambda(d\xi) = I_{C(\eta)}^{(0)} + \suml{n=1}^\infty\frac{1}{n!} \intl{(\Rd)^n} I_{C(\eta)}^{(n)}(x_1, ..., x_n) dx_1 ... dx_n.$$
First, notice that $I_{C(\eta)}^{(0)} = 0$, as empty configuration cannot have common part with any configuration. Then, because
$$I_{C(\eta)}^{(n)}(x_1, ..., x_n) \leq I_{C(\eta)}^{(1)}(x_1) + I_{C(\eta)}^{(1)}(x_2) + ... + I_{C(\eta)}^{(1)}(x_n)$$
 we have for every $n \in \mathds{N}$
$$\intl{(\Rd)^n} I_{C(\eta)}^{(n)}(x_1, ..., x_n) dx_1 ... dx_n \leq n \intl{(\Rd)^{n-1}} \Big[ \intRd I_{C(\eta)}^{(1)}(x) dx \Big] dx_1...dx_{n-1}.$$
Taking into account that
$$\intRd I_{C(\eta)}^{(1)}(x) dx = \intRd I_\eta(x)dx = l(\eta) = 0,$$
where $l$ denotes the Lebesgue measure, one can clearly see that $\lambda(C(\eta)) = 0$. \\
Therefore, when integrating over $\Gamma_0 \bs C(\eta)$ instead of $\Gamma_0$, the result is the same. However, all subconfigurations $\zeta$ of $\eta$ are disjoint with any $\xi \in \Gamma_0 \bs C(\eta)$, which allows us to separate the product taken over $\xi \cup \zeta$ into one taken over $\xi$ and another taken over $\zeta$. Thus we can write
\begin{eqnarray*}
L^{\Delta}_{11} k(\eta) &=& \frac{1}{2} \int\limits_{(\mathds{R}^d)^2} \intGamN \sum\limits_{z \in \eta} c_1(x,y;z) k(\eta \backslash z \cup \xi \cup \{x,y\}) \\
&\times & e(t^{(1)}_z - 1 , \xi)  \sum\limits_{\zeta \subset \eta \backslash z} \  e(t^{(1)}_z - 1 , \zeta) \lambda(d\xi) dx dy.
\end{eqnarray*}
Recalling the definition (\ref{KDef}) of the $K-$transform and its property (\ref{KProd}) we have
$$\sum\limits_{\zeta \subset \eta \backslash z}e(t^{(1)}_z - 1 , \zeta) = K\big(e(t^{(1)}_z - 1 , \cdot)\big) (\eta \backslash z) = e(t^{(1)}_z , \eta \bs z).$$
Therefore we can rewrite the action of $L^{\Delta}_{11}$ in the form
\begin{eqnarray*}
L^{\Delta}_{11} k(\eta) &=& \frac{1}{2} \int\limits_{(\mathds{R}^d)^2} \ \int\limits_{\Gamma_0} \sum\limits_{z \in \eta} c_1(x,y;z) k(\eta \backslash z \cup \xi \cup \{x,y\}) \\
&\times & e(t^{(1)}_z - 1 , \xi) e(t^{(1)}_z , \eta \bs z) \lambda(d\xi)  dx dy.
\end{eqnarray*}
Applying the same method for the rest of the $L^\Delta_1$ and for the $L^\Delta_2$ we obtain the result.
\end{proof}

\section{The Vlasov Scaling and the Kinetic Equation}\label{Vlasov}
\subsection{The Vlasov Scaling}~\\
We follow the scaling technique described in \cite{VlasovScaling}. Let us introduce the scale parameter $\epsilon \in [0,1]$ with $\epsilon = 1$ corresponding to the unscaled and $\epsilon =0$ to the fully rescaled case. We alter the operator $L^\Delta$ by scaling $c_1 \rightarrow \epsilon c_1$, $\phi_1 \rightarrow \epsilon c_2$ and $\phi_2 \rightarrow \epsilon \phi_2$ for $\epsilon \in (0,1]$, which can be interpreted as weakening the interactions between particles. Altered in such a way operator we denote by $L^{\Delta}_\epsilon$. Next, we renormalize it defining
\begin{equation*}%\label{LRen}
L^{ren}_{\epsilon} k(\eta) = \epsilon^{|\eta|} L^{\Delta}_\epsilon(\epsilon^{-|\eta|} k(\eta)).
\end{equation*}
Let us consider the first component of the operator $L^{ren}_{\epsilon}$. We have (cf Proposition \ref{PropLTriangle})
\begin{eqnarray*}
L^{ren}_{11, \epsilon} k (\eta) &=& \frac{1}{2} \epsilon^{|\eta|} \int\limits_{(\mathds{R}^d)^2} \ \int\limits_{\Gamma_0} \suml{z \in \eta} \epsilon c_1(x,y;z) \epsilon^{-|\eta \backslash z \cup \xi \cup \{x,y\}|} k(\eta \backslash z \cup \xi \cup \{x,y\}) \\
&\times & \prodl{u \in \xi} (e^{-\epsilon\phi_1(z-u)} - 1) \prodl{u \in \eta \backslash z} e^{-\epsilon\phi_1(z-u)} \lambda(d\xi)  dx dy.
\end{eqnarray*}
Note that integrating over $(\Rd)^2 \bs (\eta \times \eta)$ instead of $(\Rd)^2$ and over  $\Gamma_0 \bs (\eta \cup \{x,y\})$ instead of $\Gamma_0$ does not influence the result, so we have
\begin{eqnarray*}
L^{ren}_{11, \epsilon} k (\eta) &=& \frac{1}{2} \int\limits_{(\mathds{R}^d)^2} \ \int\limits_{\Gamma_0} \suml{z \in \eta} c_1(x,y;z) k(\eta \backslash z \cup \xi \cup \{x,y\}) \\
&\times & \prodl{u \in \xi} \frac{1}{\epsilon} \Big( e^{-\epsilon\phi_1(z-u)} - 1 \Big) \prodl{u \in \eta \backslash z} e^{-\epsilon\phi_1(z-u)} \lambda(d\xi)  dx dy.
\end{eqnarray*}
Let us pass with $\epsilon$ to the limit. Noting that
$$\lim\limits_{\epsilon \rightarrow 0} \frac{1}{\epsilon} \Big( e^{-\epsilon\phi_1(z-u)} - 1 \Big) = -\epsilon\phi_1(z-u),$$
we can write
\begin{eqnarray*}
\lim\limits_{\epsilon \rightarrow 0} L^{ren}_{11, \epsilon} k (\eta) &=& \frac{1}{2} \int\limits_{(\mathds{R}^d)^2} \ \int\limits_{\Gamma_0} \suml{z \in \eta} c_1(x,y;z) k(\eta \backslash z \cup \xi \cup \{x,y\}) \\
& \times & \prodl{u \in \xi} \big(-\phi_1(z-u) \big)\lambda(d\xi) dx dy.
\end{eqnarray*}
Let us denote $V = \lim\limits_{\epsilon \rightarrow 0} L^{ren}_{\epsilon}$. Calculating analogously as above, one derives
\begin{eqnarray*}%\label{LRenLim}
V k(\eta) & = & \frac{1}{2} \int\limits_{(\mathds{R}^d)^2} \ \int\limits_{\Gamma_0} \suml{z \in \eta} c_1(x,y;z) k(\eta \backslash z \cup \xi \cup \{x,y\}) \nonumber \\
& \times & \prodl{u \in \xi} (-\phi_1(z-u)  )\lambda(d\xi) dx dy \nonumber \\
& - &\frac{1}{2} \int\limits_{(\mathds{R}^d)^2} \ \int\limits_{\Gamma_0} \suml{x \in \eta} c_1(x,y;z) k(\eta \cup \xi \cup y) \prodl{u \in \xi} (-\phi_1(z-u)   )\lambda(d\xi) dy dz \nonumber \\
& - &\frac{1}{2} \int\limits_{(\mathds{R}^d)^2} \ \int\limits_{\Gamma_0} \suml{y \in \eta} c_1(x,y;z) k(\eta \cup \xi \cup x) \prodl{u \in \xi} (-\phi_1(z-u)   )\lambda(d\xi) dx dz \nonumber \\
& + &\intRd \intGamN \sum\limits_{y \in \eta} k(\eta \backslash y \cup \xi \cup x) c_2(x;y) \prodl{u \in \xi} (-\phi_2(y-u)) \lambda(d\xi) dx \nonumber \\
& - & \intRd \intGamN k(\eta \cup \xi) \sum\limits_{x \in \eta} c_2(x;y) \prodl{u \in \xi} (-\phi_2(y-u)) \lambda(d\xi) dy.
\end{eqnarray*}
Consider the following problem
\begin{equation}\label{VlasovEq}
\frac{d}{dt} r_t = V r_t, \quad r_{t=0} = r_0
\end{equation}
in the Banach space
$$\mathcal{K}_\theta = \{r: \Gamma_0 \rightarrow \Rd: ||r||_\theta < \infty \},$$
where
$$||r||_\theta = \esssup_{\eta \in \Gamma_0} e^{\theta |\eta|} ||r(\eta)||_{L^\infty} $$
Supposing that the initial state is the Poisson measure, $r_0$ can be factorized
$$r_0(\eta) = \prodl{x \in \eta} \rho_0(x).$$
If $r_t$ can be written in the product form, that is
\begin{equation*}%\label{CorrFunProd}
r_t(\eta) = \prodl{x \in \eta} \rho_t(x),
\end{equation*}
then we can write
$$\frac{d}{dt} r_t (\eta) = \frac{d}{dt} \prodl{x \in \eta} \rho_t(x) = \suml{x \in \eta} \Big( \prodl{y \in \eta \bs x} \rho_t(y) \Big) \frac{d}{dt} \rho_t(x). $$
Therefore, by expressing $V r_t$ in the form
$$V r_t (\eta) = \suml{x \in \eta} \Big( \prodl{y \in \eta \bs x} \rho_t(y) \Big) v(\rho_t, x),$$
we can obtain a problem for $\rho_t$ corresponding to (\ref{VlasovEq}), namely a kinetic equation
\begin{equation}\label{KineticGeneral}
\frac{d}{dt} \rho_t (x) = v(\rho_t, x), \quad \rho_{t=0} = \rho_0,
\end{equation}
where $r_0(x) = \prodl{x \in \eta} \rho_0(x)$.
Indeed, if $\rho_t$ is a solution of (\ref{KineticGeneral}), then we can easily check that
$$k_t(\eta) = \prodl{x \in \eta} \rho_t(x)$$
is a solution of (\ref{VlasovEq}).

Let us denote the first component of $V$ by $V_1$. We have
\begin{eqnarray*}
V_1 r_t (\eta) &=& \frac{1}{2} \int\limits_{(\mathds{R}^d)^2} \ \int\limits_{\Gamma_0} \suml{z \in \eta} c_1(x,y;z) r_t(\eta \backslash z \cup \xi \cup \{x,y\}) \\
&\times & \prodl{u \in \xi} (-\phi_1(z-u))\lambda(d\xi) dx dy \\
&=& \frac{1}{2} \int\limits_{(\mathds{R}^d)^2} \ \int\limits_{\Gamma_0} \suml{z \in \eta} c_1(x,y;z) \prodl{v \in \eta \backslash z \cup \xi \cup \{x,y\}} \rho_t(v) \\
& \times & \prodl{u \in \xi} (-\phi_1(z-u))\lambda(d\xi) dx dy.
\end{eqnarray*}
Because $l(\eta) = 0$ and $\lambda(C(\eta \cup \{x,y\})) = 0$, one can rewrite above as
\begin{eqnarray*}
&&V_1 r_t (\eta) = \suml{z \in \eta} \Big( \prodl{v \in \eta \bs z} \rho_t(v) \Big) \Big( \frac{1}{2} \int\limits_{(\mathds{R}^d)^2 \bs (\eta \times \eta)} \ \int\limits_{\Gamma_0 \bs C(\eta \cup \{x,y\})} c_1(x,y;z) \\
&& \times \ \rho_t(x) \rho_t(y) \prodl{u \in \xi} (-\rho_t(u) \phi_1(z-u))\lambda(d\xi) dx dy \Big).
\end{eqnarray*}
Therefore
$$V_1 r_t (\eta) = \suml{z \in \eta} \Big( \prodl{v \in \eta \bs z} \rho_t(v) \Big) v_1(\rho_t, z),$$
where
$$v_1(\rho_t, z) = \frac{1}{2} \int\limits_{(\mathds{R}^d)^2} c_1(x,y;z) \rho_t(x) \rho_t(y) \int\limits_{\Gamma_0} \prodl{u \in \xi} (-\rho_t(u) \phi_1(z-u))\lambda(d\xi) dx dy.$$
Noting that for $ a(u) = -\rho_t(u) \phi_1(z-u)$
\begin{eqnarray*}
\int\limits_{\Gamma_0} \prodl{u \in \xi} a(u) &=& 1 + \suml{n=1}^\infty \frac{1}{n!} \intl{(\Rd)^n} a(x_1) ... a(x_n) dx_1 ... dx_n \\
&=& 1 + \prodl{n=1}^\infty \frac{1}{n!} \Big( \intRd a(u) du \Big)^n = \exp \Big( \intRd a(u) du \Big),
\end{eqnarray*}
we can reformulate the above obtaining
$$v_1(\rho_t, x) = \frac{1}{2} \int\limits_{(\mathds{R}^d)^2} c_1(y,z;x) \rho_t(y) \rho_t(z) \exp \Big( - \intRd \phi_1(x-u) \rho_t(u) du \Big) dy dz. $$
Calculating analogously as above, one can obtain explicit form of $v$ and thus the following kinetic equation
\begin{eqnarray}\label{KinEq}
 \frac{d}{dt} \rho_t (x) & =&  \frac{1}{2} \int\limits_{(\mathds{R}^d)^2} c_1(y,z;x) \exp \Big( - \intRd \phi_1(x-u) \rho_t(u) du \Big) \rho_t(y) \rho_t(z) dy dz \nonumber \\
 & -& \frac{1}{2} \int\limits_{(\mathds{R}^d)^2} \Big( c_1(x,y;z) + c_1(y,x;z) \Big) \nonumber \\
 & \times & \exp \Big( - \intRd \phi_1(z-u) \rho_t(u) du \Big) \rho_t(x) \rho_t(y) dy dz \nonumber \\
 & + & \intRd  c_2(y;x) \exp \Big( - \intRd \phi_2(x-u) \rho_t(u) du \Big) \rho_t(y) dy \nonumber \\
 & - & \intRd  c_2(x;y) \exp \Big( - \intRd \phi_2(y-u) \rho_t(u) du \Big) \rho_t(x) dy, \nonumber \\
 \rho_{t=0} &=& \rho_0.
\end{eqnarray}

\subsection{The Kinetic Equation}~\\
Let us rewrite the problem (\ref{KinEq}) as
\begin{equation}\label{KinEq2}
\frac{d}{dt} \rho_t (x) = R_1(\rho_t,x) + R_2(\rho_t,x), \quad \rho_{t=0}(x) = \rho_0 (x),
\end{equation}
where
\begin{eqnarray*}
R_1(\rho_t,x) &=& - \frac{1}{2} \rho_t(x) \intl{(\Rd)^2} \Big( c_1(x,y;z) + c_1(y,x;z) \Big) \rho_t(y) dy dz \\
&-& h(\rho_t, x) \intRd c_2(x;y) dy \\
&=& - \rho_t (x) h(\rho_t, x)
\end{eqnarray*}
for
$$h(\rho_t, x) = \frac{1}{2} \intl{(\Rd)^2} \Big( c_1(x,y;z) + c_1(y,x;z) \Big) \rho_t(y) dy dz \ + \cTwo $$
and
\begin{eqnarray*}
R_2(\rho_t, x) & =& \frac{1}{2} \int\limits_{(\mathds{R}^d)^2} c_1(y,z;x) \exp \Big( - \intRd \phi_1(x-u) \rho_t(u) du \Big) \rho_t(y) \rho_t(z) dy dz \\
 & +& \frac{1}{2} \int\limits_{(\mathds{R}^d)^2} \Big( c_1(x,y;z) + c_1(y,x;z) \Big) \\
 & \times & \Big[1 - \exp \Big( - \intRd \phi_1(z-u) \rho_t(u) du \Big) \Big] \rho_t(x) \rho_t(y) dy dz \\
 & +& \intRd  c_2(y;x) \exp \Big( - \intRd \phi_2(x-u) \rho_t(u) du \Big) \rho_t(y) dy \\
 & +& \intRd  c_2(x;y) \Big[1 - \exp \Big( - \intRd \phi_2(y-u) \rho_t(u) du \Big) \Big] \rho_t(x) dy.
\end{eqnarray*}
%by variation of constants
Note that from (\ref{KinEq2}) we can obtain the equivalent integral equation
\begin{equation}\label{IntEq}
\rho_t(x) = \rho_0(x) \exp \Big( -\intl{0}^t h(\rho_s, x) ds \Big) + \intl{0}^t R_2(\rho_s,x) \exp \Big( - \intl{s}^t h(\rho_\sigma, x) d\sigma \Big) ds.
\end{equation}
\begin{theorem}\label{Thm}
Problem (\ref{KinEq2}) with the initial condition $\rho_0 \in L^\infty(\Rd)), \\ \rho_0 \geq 0$ has the unique local classical solution.
\end{theorem}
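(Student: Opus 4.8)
The plan is to establish local existence and uniqueness by a contraction-mapping argument applied to the integral reformulation (\ref{IntEq}), which is the crucial device here: the integrating factor $\exp\big(-\int_s^t h\big)$ turns the dangerous linear loss term $R_1 = -\rho_t\,h(\rho_t,\cdot)$ into a bounded multiplier (because $h\geq 0$ whenever $\rho\geq 0$) and it renders positivity manifest. First I would fix the Banach space $X_T = C\big([0,T];L^\infty(\Rd)\big)$ with norm $\|\rho\|_{X_T}=\sup_{t\in[0,T]}\|\rho_t\|_{L^\infty}$ and, for some $R>\|\rho_0\|_{L^\infty}$, introduce the closed (hence complete) set $B_{R,T}=\{\rho\in X_T:\|\rho\|_{X_T}\leq R,\ \rho_t\geq 0 \text{ for all } t\}$, on which I define $\Phi$ to be the right-hand side of (\ref{IntEq}).

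The first batch of work is a set of uniform bounds coming directly from the integrability hypotheses on $c_1,c_2,\phi_1,\phi_2$. Using $\int_{(\Rd)^2} c_1(x,y;z)\,dy\,dz=\cOne$ together with its permuted versions, I get $0\leq h(\rho_t,x)\leq \cOne\|\rho_t\|_{L^\infty}+\cTwo$ uniformly in $x$, and, bounding the inner exponentials by $1$ and the brackets $1-e^{-s}$ by $1$, a polynomial bound $\|R_2(\rho_t,\cdot)\|_{L^\infty}\leq C(\cOne,\cTwo)\big(\|\rho_t\|_{L^\infty}^2+\|\rho_t\|_{L^\infty}\big)$; these estimates also show $\Phi\rho$ is well defined and continuous in $t$. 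Since $h\geq 0$ forces $\exp(-\int_s^t h)\leq 1$, the self-mapping estimate reduces to $\|(\Phi\rho)_t\|_{L^\infty}\leq \|\rho_0\|_{L^\infty}+T\sup_s\|R_2(\rho_s,\cdot)\|_{L^\infty}$, so fixing $R$ slightly above $\|\rho_0\|_{L^\infty}$ and then $T$ small makes $\Phi$ map $B_{R,T}$ into itself. Positivity of $\Phi\rho$ follows termwise: the first summand of (\ref{IntEq}) is a product of nonnegative factors, and each term of $R_2$ is nonnegative when $\rho\geq 0$, the brackets $1-\exp(-\int\phi_i\rho)$ being nonnegative precisely because $\phi_i\geq 0$ and $\rho\geq 0$.

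Next I would prove that $\Phi$ is a contraction on $B_{R,T}$ for $T$ small. The linear dependence of $h$ on $\rho$ gives $|h(\rho,x)-h(\tilde\rho,x)|\leq \cOne\|\rho-\tilde\rho\|_{L^\infty}$; combined with $|e^{-a}-e^{-b}|\leq|a-b|$ for $a,b\geq 0$ this controls the differences of both the outer integrating factors (by $(t-s)\cOne\|\rho-\tilde\rho\|_{X_T}$) and the inner weights $\exp(-\int\phi_i\rho)$ (by $\phiOne$, resp. $\phiTwo$, times $\|\rho-\tilde\rho\|_{L^\infty}$). Treating the remaining differences of products $\rho(y)\rho(z)$ by the elementary telescoping $|ab-\tilde a\tilde b|\leq|a||b-\tilde b|+|\tilde b||a-\tilde a|$ and using the uniform bound $R$ on $B_{R,T}$, I obtain $\|\Phi\rho-\Phi\tilde\rho\|_{X_T}\leq T\,L(R)\,\|\rho-\tilde\rho\|_{X_T}$ with $L(R)$ an explicit constant. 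Shrinking $T$ so that $T\,L(R)<1$ (and no larger than the value fixed in the self-mapping step) yields, by the Banach fixed point theorem, a unique fixed point $\rho\in B_{R,T}$; as (\ref{IntEq}) and (\ref{KinEq2}) are equivalent, this is the unique local solution, and it is nonnegative.

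Finally, to upgrade to a classical solution I would observe that the fixed point lies in $C\big([0,T];L^\infty\big)$, so $t\mapsto R_1(\rho_t,x)+R_2(\rho_t,x)$ is continuous; differentiating (\ref{IntEq}) in the upper limit (the integrand being continuous there) shows that $\rho$ is $C^1$ in $t$ and recovers (\ref{KinEq2}) pointwise, the $h$-terms reassembling into $-h(\rho_t)\rho_t=R_1$. The main obstacle is the contraction step: assembling the $L^\infty$-Lipschitz estimate for $R_2$ requires simultaneously controlling the quadratic $\rho$-products, the inner exponential weights, and the outer integrating factor, so the bookkeeping is the genuine labor; every piece, however, is tamed by the finiteness of $\cOne,\cTwo,\phiOne,\phiTwo$, so no essentially new difficulty arises beyond careful estimation.
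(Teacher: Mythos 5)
Your proposal is correct and follows essentially the same route as the paper: a Banach fixed-point argument for the map given by the right-hand side of (\ref{IntEq}) on a closed positive ball in $C([0,T];L^\infty(\Rd))$, with self-mapping obtained because $h\geq 0$ makes the integrating factor $\leq 1$, positivity preserved termwise, and the contraction built from $|h(\rho,x)-h(\tilde\rho,x)|\leq \cOne\|\rho-\tilde\rho\|_{L^\infty}$, $|e^{-a}-e^{-b}|\leq|a-b|$, and telescoping of products. The only deviation is technical rather than conceptual: the paper uses the weighted norm $\sup_{t}e^{-\gamma\cTwo t}\|\rho_t\|_{L^\infty}$ together with the strict bound $h\geq\cTwo$ (choosing $\gamma$ large so that the relevant comparison function satisfies $f'(0)<0$, which permits radius $r\geq\|\rho_0\|_{L^\infty}$), whereas you work with the plain sup-norm and take $R>\|\rho_0\|_{L^\infty}$ strictly with $T$ small --- both choices deliver the same local existence and uniqueness.
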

\noindent Consider $X_T = C([0, T] \rightarrow L^\infty(\Rd)), \ T > 0$ with the norm
$$||\rho||_{T,\gamma} = \sup\limits_{t \in [0,T]} e^{- \gamma \cTwo t} ||\rho_t||_{L^\infty} . $$
Denote
\begin{eqnarray*}
&&B_{T,\gamma}(r) = \{ \rho \in X_T: \ ||\rho||_{T,\gamma} \leq r,  \ \rho_t \geq 0 \  \forall t \in [0,T]\},\\
&&B_{T,\gamma}(r, \rho_0) = \{ \psi \in B_{T,\gamma}(r): \psi_0 = \rho_0 \},
\end{eqnarray*}
where $\rho_0 \in L^\infty(\Rd), \rho_0 \geq 0$, $r \geq ||\rho_0||_{L^\infty}$ and $T, \gamma > 0$.
\begin{lemma}\label{LemmaOne}
Given $r > 0$, there exist $\gamma, \tilde{T} > 0$ such that $F$ defined by the RHS of (\ref{IntEq}) with the domain $B_{T^*,\gamma}(r) \subset X_{T^*}$ acts again to the $B_{T^*,\gamma}(r)$ for any $T^* \in [0, \tilde{T}]$.
\end{lemma}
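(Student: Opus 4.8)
The plan is to treat $F$, defined through the right-hand side of (\ref{IntEq}), as a nonlinear integral operator and to verify directly the two defining properties of $B_{T^*,\gamma}(r)$: pointwise nonnegativity $F(\rho)_t\ge 0$ and the weighted bound $\|F(\rho)\|_{T^*,\gamma}\le r$. That $F(\rho)$ belongs to $X_{T^*}=C([0,T^*]\to L^\infty(\Rd))$ I would treat as routine: on the ball the functions $h$ and $R_2$ are uniformly bounded (see below), and since $s\mapsto e^{-s}$ is Lipschitz on bounded sets, both terms of (\ref{IntEq}) depend continuously on $t$ in the $L^\infty$-norm, uniformly in $x$. So I would concentrate on the two structural properties.

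I would begin with positivity. For $\rho\in B_{T^*,\gamma}(r)$ each $\rho_s$ is nonnegative and $\phi_1,\phi_2\ge 0$, so every exponent $-\intRd\phi_i(\cdot-u)\rho_s(u)\,du$ is $\le 0$ and each factor $\exp(\cdots)$ lies in $(0,1]$; hence every bracket $[1-\exp(\cdots)]$ appearing in $R_2$ is nonnegative. Together with $c_1,c_2\ge 0$ this makes all four summands of $R_2(\rho_s,\cdot)$ nonnegative, while $h(\rho_s,\cdot)\ge\cTwo>0$. Since $\rho_0\ge 0$ and $e^{-\int h}\le 1$, formula (\ref{IntEq}) exhibits $F(\rho)_t$ as a sum of nonnegative contributions, so $F(\rho)_t\ge 0$.

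Next I would establish the uniform pointwise estimates that drive everything. Using the integrability of $c_1,c_2$ (finiteness of $\cOne,\cTwo$), for $\rho_s\ge 0$ one gets $\cTwo\le h(\rho_s,x)\le\cOne\,\|\rho_s\|_{L^\infty}+\cTwo$ and, after bounding each exponential and each bracket by $1$ and integrating out the kernels, $0\le R_2(\rho_s,x)\le a\,\cOne\,\|\rho_s\|_{L^\infty}^2+b\,\cTwo\,\|\rho_s\|_{L^\infty}$ with absolute constants $a,b$. The lower bound $h\ge\cTwo$ is the crucial gain: it yields the genuine decay $\exp\big(-\int_s^t h(\rho_\sigma,x)\,d\sigma\big)\le e^{-\cTwo(t-s)}$. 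Inserting the ball condition $\|\rho_s\|_{L^\infty}\le r\,e^{\gamma\cTwo s}$ into (\ref{IntEq}), bounding $F(\rho)_t$, multiplying by the weight $e^{-\gamma\cTwo t}$ and evaluating the elementary time integrals, I expect an estimate of the shape
\[
e^{-\gamma\cTwo t}\|F(\rho)_t\|_{L^\infty}\le \|\rho_0\|_{L^\infty}e^{-(\gamma+1)\cTwo t}+\frac{b\,r}{\gamma+1}\big(1-e^{-(\gamma+1)\cTwo t}\big)+\frac{a\,\cOne\,r^2}{(2\gamma+1)\cTwo}\big(e^{\gamma\cTwo t}-e^{-(\gamma+1)\cTwo t}\big).
\]

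Finally I would close the estimate by a two-step choice of parameters, and this is where the main obstacle lies. The first two terms form a convex combination of $\|\rho_0\|_{L^\infty}\le r$ and $b\,r/(\gamma+1)$, hence never exceed $r$; in fact, for $\gamma+1\ge b$ they leave below the level $r$ a deficit at least $r\big(1-e^{-(\gamma+1)\cTwo t}\big)\big(1-\tfrac{b}{\gamma+1}\big)$. The third term is the growing quadratic coalescence gain that must be absorbed into this deficit. Writing $g(t)$ for the deficit minus the quadratic term, one has $g(0)=0$ and $g'(0)=r\cTwo(\gamma+1-b)-a\,\cOne\,r^2$, which---this is the point---becomes strictly positive once $\gamma$ is taken large enough in terms of $r$ (roughly $\gamma>b-1+a\,\cOne\,r/\cTwo$). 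Fixing such a $\gamma$ and invoking $g(0)=0$, $g'(0)>0$ and the $C^1$-dependence on $t$ produces a $\tilde T>0$ with $g\ge 0$ on $[0,\tilde T]$, whence $e^{-\gamma\cTwo t}\|F(\rho)_t\|_{L^\infty}\le r$ for every $T^*\le\tilde T$. The delicate part is exactly this ordering of quantifiers: $\gamma$ must be chosen first, large and depending on $r$, so that the linear jump gain and the initial slope of $g$ are controlled, and only afterwards is $\tilde T$ shrunk to keep the exponentially growing quadratic gain beneath the deficit created by the dissipative loss $h\ge\cTwo$.
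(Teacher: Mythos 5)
Your proposal is correct and follows essentially the same route as the paper: the same three estimates ($h \geq \cTwo$, the quadratic-plus-linear bound on $R_2$ with your constants $a,b$ being the paper's $\tfrac{3}{2}$ and $2$, and the ball condition $\|\rho_s\|_{L^\infty}\leq r e^{\gamma\cTwo s}$), the same weighted bound after integrating the exponential decay $e^{-\cTwo(t-s)}$, and the same closure via a derivative-at-zero argument with the identical quantifier order ($\gamma$ first, large depending on $r$, then $\tilde T$). Your condition $g(0)=0$, $g'(0)=r\cTwo(\gamma+1-b)-a\cOne r^2>0$ is exactly the paper's $f(0)=1$, $f'(0)=-(\gamma+1)\cTwo+\tfrac{3}{2}\cOne r+2\cTwo<0$, and your threshold $\gamma>b-1+a\cOne r/\cTwo$ reproduces the paper's choice $\gamma>1+\tfrac{3\cOne r}{2\cTwo}$.
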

\begin{lemma}\label{LemmaTwo}
Let $\rho_0 \in L^\infty(\Rd), \rho_0 \geq 0$ and $r \geq ||\rho_0||_{L^\infty}$. Let $\tilde{T}, \gamma$ satisfy lemma \ref{LemmaOne} for this $r$. We can choose $T^* \in [0, \tilde{T}]$ in such a way that for any  $\rho$, $\psi$ in $B_{T^*,\gamma}(r, \rho_0)$ the inequality $||F(\rho) - F(\psi)||_{T^*, \gamma} \leq C ||\rho - \psi||_{T^*, \gamma}$ holds for some constant $C < 1$.
\end{lemma}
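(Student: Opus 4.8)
The plan is to run the Banach fixed-point argument: since $F$ maps $B_{T^*,\gamma}(r,\rho_0)$ into itself by Lemma \ref{LemmaOne}, and since $B_{T^*,\gamma}(r,\rho_0)$ is a closed (hence complete) subset of $X_{T^*}$, it suffices to make $F$ a strict contraction by shrinking $T^*$. Fix $\rho,\psi\in B_{T^*,\gamma}(r,\rho_0)$; because $\rho_0=\psi_0$ the two functions share the same initial datum, so subtracting the two copies of (\ref{IntEq}) gives
\begin{eqnarray*}
F(\rho)_t(x)-F(\psi)_t(x) &=& \rho_0(x)\Big[e^{-\int_0^t h(\rho_s,x)\,ds}-e^{-\int_0^t h(\psi_s,x)\,ds}\Big] \\
&+& \int_0^t\Big[R_2(\rho_s,x)\,e^{-\int_s^t h(\rho_\sigma,x)\,d\sigma}-R_2(\psi_s,x)\,e^{-\int_s^t h(\psi_\sigma,x)\,d\sigma}\Big]\,ds.
\end{eqnarray*}
I would estimate the two lines separately, everywhere exploiting that $h\ge 0$ (all of $c_1,c_2,\rho$ are nonnegative), so every exponential factor above is bounded by $1$ and the elementary inequality $|e^{-a}-e^{-b}|\le|a-b|$ for $a,b\ge0$ applies to the exponential differences.

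Two Lipschitz estimates carry the argument. First, $h(\cdot,x)$ is affine in its argument and the additive constant $\cTwo$ cancels, so, using $\int_{(\Rd)^2}c_1(x,y;z)\,dy\,dz=\cOne$,
$$|h(\rho_s,x)-h(\psi_s,x)|\le \cOne\,||\rho_s-\psi_s||_{L^\infty}.$$
Second --- and this is the \emph{technical heart} --- one needs a Lipschitz bound for $R_2$ on the ball, $|R_2(\rho_s,x)-R_2(\psi_s,x)|\le C_{R_2}\,||\rho_s-\psi_s||_{L^\infty}$, together with a uniform bound $|R_2(\psi_s,x)|\le C'_{R_2}$. Here $C_{R_2},C'_{R_2}$ depend only on $r,\gamma,\tilde T$ through $M:=r\,e^{\gamma\cTwo\tilde T}$, which bounds $||\rho_s||_{L^\infty}$ and $||\psi_s||_{L^\infty}$ on the ball. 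To obtain these I would treat each of the four terms of $R_2$ by the product rule $|abc-a'b'c'|\le|a||b||c-c'|+\dots$, bounding the integrated kernels by $\cOne$ or $\cTwo$, the weights $e^{-\int\phi_i\rho}$ and $1-e^{-\int\phi_i\rho}$ by $1$ and their differences by $\phiOne\,||\rho_s-\psi_s||_{L^\infty}$ (resp.\ $\phiTwo$), and the remaining factors $\rho(x),\rho(y),\rho(z)$ by $M$.

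Plugging these into the two lines, every contribution carries a time integral of $||\rho_s-\psi_s||_{L^\infty}$ (the piece with the exponential difference even carries a double time integral). The weighted norm is precisely tailored to absorb this: since $||\rho_s-\psi_s||_{L^\infty}\le||\rho-\psi||_{T^*,\gamma}\,e^{\gamma\cTwo s}$,
$$e^{-\gamma\cTwo t}\int_0^t e^{\gamma\cTwo s}\,ds=\frac{1-e^{-\gamma\cTwo t}}{\gamma\cTwo}\le \min\Big(T^*,\tfrac{1}{\gamma\cTwo}\Big)\le T^*.$$
Multiplying through by $e^{-\gamma\cTwo t}$ and taking the supremum over $t\in[0,T^*]$ then yields $||F(\rho)-F(\psi)||_{T^*,\gamma}\le A\,T^*\,||\rho-\psi||_{T^*,\gamma}$, with $A=A(r,\gamma,\tilde T)$ collecting the constants $r\cOne$, $C_{R_2}$ and $C'_{R_2}\cOne/(\gamma\cTwo)$ (all finite, with $\gamma,\tilde T$ fixed from Lemma \ref{LemmaOne}). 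Choosing $T^*\le\min(\tilde T,\tfrac{1}{2A})$ gives the contraction constant $C=AT^*<1$, which is the claim; combined with Lemma \ref{LemmaOne} and the contraction mapping principle this yields the unique local solution of Theorem \ref{Thm}.

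The main obstacle I anticipate is the bookkeeping in the $R_2$-Lipschitz estimate: $R_2$ is a sum of four terms, each a product of an integrated kernel, one or two copies of $\rho$, and an exponential weight, so telescoping produces many summands and one must track which argument ($x$, $y$, or $z$) each $L^\infty$-difference lands on and check that the relevant coefficient integral (namely $\cOne$ over the correct pair of slots, or $\cTwo$) is the finite one guaranteed by the standing assumptions. Nothing here is deep, but it is the step where the symmetry $c_1(x,y;z)=c_1(y,x;z)$ and the choice of integration slots must be used correctly.
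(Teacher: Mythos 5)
Your proposal is correct and follows essentially the same route as the paper: the same decomposition of $F(\rho)-F(\psi)$ into the initial-data exponential difference plus the $R_2$-term difference, the same two Lipschitz ingredients (the bound $|h(\rho_s,x)-h(\psi_s,x)|\le\cOne\,\|\rho_s-\psi_s\|_{L^\infty}$ and the four-term telescoping estimate for $R_2$ with weights controlled via $\phiOne,\phiTwo$), and the same absorption of the time integrals by the weighted norm so that the contraction constant vanishes as $T^*\to 0$. The only differences are cosmetic: you handle the exponential difference in the $R_2$ line by the product rule together with $|e^{-a}-e^{-b}|\le|a-b|$, where the paper runs a two-case sign analysis arriving at the same bound, and you bound the time-dependent factors uniformly by $M=re^{\gamma\cTwo\tilde T}$ to get a constant linear in $T^*$, where the paper keeps an explicit increasing function $f(t)$ with $f(0)=0$.
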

\begin{proof}[\bfseries{Proof of Theorem \ref{Thm}}]
Choose $r > ||\rho_0||_{L^\infty}$ and take corresponding $\gamma, \tilde{T}$ from lemma \ref{LemmaOne}. Take $T^*$ as in lemma \ref{LemmaTwo}. Define the sequence of Picard iterations $(\rho^{(n)})_{n \in \mathds{N}_0}$ in the following way
\begin{eqnarray}\label{ThmSeq}
&&\rho^{(0)}_t = \rho_0 \ \forall t \in [0, T^*], \nonumber \\
&&\rho^{(n)} = F(\rho^{(n-1)}), \ n \in \mathds{N}.
\end{eqnarray}
Obviously, $\rho^{(0)} \in B_{T^*, \gamma}(r)$. Therefore, by lemma \ref{LemmaOne}, $\rho^{(n)} \in B_{T^*, \gamma}(r)$ for all $n \in \mathds{N_0}$ and from lemma \ref{LemmaTwo} we obtain
$$||\rho^{(n+k)} - \rho^{(n)} ||_{T^*,\gamma} \leq ||\rho^{(1)} - \rho^{(0)} ||_{T^*,\gamma} \suml{i=1}^k C^{n+i-1} \leq ||\rho^{(1)} - \rho^{(0)} ||_{T^*,\gamma} \frac{C^n}{1-C},$$ where $C < 1$ is a positive constant. Therefore $\big( \rho^{(n)} \big)_{n \in \mathds{N}_0}$ defined by (\ref{ThmSeq}) is a Cauchy sequence. As $B_{T^*, \gamma}(r)$ is a closed subset of a Banach space, there exists $$\lim\limits_{n \rightarrow \infty} \rho^{(n)} = \rho \in B_{T^*, \gamma}(r). $$
Clearly $F(\rho) = \rho$ and therefore $\rho_t $ satisfies the integral equation (\ref{IntEq}) for $t \in [0, T^*]$. Thus it is a local classical solution of (\ref{KinEq2}). \\
Now suppose there is another local classical solution of this equation $\psi$. Then $\psi_0 = \rho_0$ and for $r, \gamma, T^*$ as above, there exists $T \leq T^*$ such that $\psi \in B_{T, \gamma}(r)$. However, from lemma \ref{LemmaTwo} we have
$$||\rho - \psi||_{T, \gamma} = ||F(\rho) - F(\psi)||_{T, \gamma} \leq C ||\rho - \psi||_{T, \gamma}$$
for $C < 1$, which means that $$||\rho - \psi||_{T, \gamma} = 0 $$ and thus $\rho$ is the unieque local classical solution.
\end{proof}
\begin{proof}[\bfseries{Proof of Lemma \ref{LemmaOne}}]
Take arbitrary $T, \gamma > 0$ and $\rho \in B_{T, \gamma}(r)$. Note that
\begin{eqnarray}\label{LemmaOneEq}
&& h(\rho_t, x) \geq \ \cTwo, \nonumber \\
&&R_2(\rho_t,x) \leq \frac{3}{2} ||\rho_t||^2_{L^\infty} \cOne + 2 ||\rho_t||_{L^\infty} \cTwo, \nonumber \\
&&\rho_t(x) \leq ||\rho_t||_{L^\infty} \leq e^{\gamma \cTwo t} ||\rho||_{T, \gamma}.
\end{eqnarray}
It is obvious that $F$ preserves positiveness of $\rho$. Furthermore, using above estimates and the definition of $B_{T,\gamma}(r)$ we derive
\begin{eqnarray*}
\Big( F(\rho) \Big)_t(x) &=& \rho_0(x) \exp \Big( -\intl{0}^t h(\rho_s, x) ds \Big) \\
&+& \intl{0}^t R_2(\rho_s,x) \exp \Big( - \intl{s}^t h(\rho_\sigma, x) d\sigma \Big) ds \\
&\leq & ||\rho_0||_{L^\infty} e^{-t \cTwo} + \intl{0}^t R_2(\rho_s,x) e^{(s-t) \cTwo} ds \\
&\leq & e^{-t \cTwo} \Big[ ||\rho||_{T,\gamma} + \intl{0}^t \Big( \frac{3}{2} \cOne e^{(2 \gamma + 1) \cTwo s} ||\rho||^2_{T,\gamma} \\
&+& 2 \cTwo e^{(\gamma + 1) \cTwo s} ||\rho||_{T,\gamma} \Big) ds  \Big].
\end{eqnarray*}
Therefore we obtain
\begin{eqnarray*}
\Big|\Big|\Big( F(\rho) \Big)_t\Big|\Big|_{L^\infty} &\leq & e^{-t \cTwo} r \Big[1 + \frac{3 \cOne r }{2 (2 \gamma + 1) \cTwo} \Big( e^{(2 \gamma + 1) \cTwo t} - 1 \Big) \\
& +& \frac{2}{\gamma + 1} \Big( e^{(\gamma + 1) \cTwo t} - 1 \Big) \Big].
\end{eqnarray*}
Thus
$$\Big|\Big|\Big( F(\rho) \Big)\Big|\Big|_{T, \gamma} \leq r \sup\limits_{t \in [0,T]} f(t), $$
where
\begin{eqnarray*}
f(t) &=& e^{-(\gamma+1) \cTwo t} \Big[1 + \frac{3 \cOne r }{2 (2 \gamma + 1) \cTwo} \Big( e^{(2 \gamma + 1) \cTwo t} - 1 \Big) \\
 &+& \frac{2}{\gamma + 1} \Big( e^{(\gamma + 1) \cTwo t} - 1 \Big) \Big].
\end{eqnarray*}
Note that $f(0) = 1$. Additionally
\begin{eqnarray*}
f'(t) &=& -(\gamma + 1) \cTwo e^{-(\gamma+1) \cTwo t} \Big[1 + \frac{3 \cOne r }{2 (2 \gamma + 1) \cTwo} \Big( e^{(2 \gamma + 1) \cTwo t} - 1 \Big) \\
 &+& \frac{2}{(\gamma + 1)} \Big( e^{(\gamma + 1) \cTwo t} - 1 \Big) \Big] \\
  &+& e^{-(\gamma+1) \cTwo t} \Big[\frac{3 \cOne r }{2} e^{(2 \gamma + 1) \cTwo t}  + 2 \cTwo e^{(\gamma + 1) \cTwo t} \Big].
\end{eqnarray*}
and hence
$$f'(0) = -(\gamma + 1)\cTwo + \Big(\frac{3}{2} \cOne r +2 \cTwo \Big).$$
Choosing $\gamma > 1 + \frac{3 \cOne r}{2 \cTwo}$ we have $f'(0) < 0$, which guarantees existence of $\tilde{T}$ such that $\sup\limits_{t \in [0,\tilde{T}]} f(t) = 1$. Taking $T = T^*$ for $T^* \in [0,\tilde{T}]$ yields
$$ ||F(\rho)||_{T^*, \gamma} \leq r.$$
Therefore $F(\rho) \in B_{T^*, \gamma}(r)$ for $\rho \in B_{T^*, \gamma}(r)$.
\end{proof}
\begin{proof}[\bfseries{Proof of lemma \ref{LemmaTwo}}]
We have
\begin{eqnarray}\label{LemmaTwoEq}
&&\Big(F(\rho) - F(\psi)\Big)_t (x) \nonumber \\
 && = \rho_0(x) \exp \Big( -\intl{0}^t h(\rho_s, x) ds \Big) + \intl{0}^t R_2(\rho_s,x) \exp \Big( - \intl{s}^t h(\rho_\sigma, x) d\sigma \Big) ds \nonumber \\
&& - \rho_0(x) \exp \Big( -\intl{0}^t h(\psi_s, x) ds \Big) - \intl{0}^t R_2(\psi_s,x) \exp \Big( - \intl{s}^t h(\psi_\sigma, x) d\sigma \Big) ds \nonumber \\
&& = D_1 + \intl{0}^t D_2 ds,
\end{eqnarray}
where
$$D_1 = \rho_0(x) \Big[ \exp\Big( - \intl{0}^t h(\rho_s, x) ds  \Big) - \exp\Big( - \intl{0}^t h(\psi_s, x) ds  \Big)  \Big]$$
and
\begin{eqnarray*}
D_2 &=& \intl{0}^t \Big[ R_2(\rho_s,x) \exp \Big( - \intl{s}^t h(\rho_\sigma, x) d\sigma \Big) \\
&-& R_2(\psi_s,x) \exp \Big( - \intl{s}^t h(\psi_\sigma, x) d\sigma \Big)  \Big]ds.
\end{eqnarray*}
Take an arbitrary $T^* \in [0, \tilde{T}]$. We have
\begin{equation}\label{LemmaTwoEstimateZero}
|D_1| \leq ||\rho_0||_{L^\infty} \cOne \intl{0}^t ||\rho_s - \psi_s||_{L^\infty} ds \leq r \cOne t e^{\gamma \cTwo t} ||\rho - \psi||_{T^*, \gamma}.
\end{equation}
To estimate $|D_2|$, consider two cases. First, suppose
$$\intl{s}^t \Big( h(\rho_\sigma, x) - h(\psi_\sigma x) \Big) d\sigma \geq 0.$$
Then
\begin{eqnarray*}
|D_2| &\leq & \Big| R_2(\rho_s, x) \exp\Big[ - \intl{s}^t \Big( h(\rho_\sigma, x) - h(\psi_\sigma, x) \Big) d\sigma \Big] \\
 &-& R_2(\psi_s, x) \exp\Big[ - \intl{s}^t \Big( h(\rho_\sigma, x) - h(\psi_\sigma, x) \Big) d\sigma \Big]  \Big| \\
 &+& \Big|  R_2(\psi_s, x) \exp\Big[ - \intl{s}^t \Big( h(\rho_\sigma, x) - h(\psi_\sigma, x) \Big) d\sigma \Big]  - R_2(\psi_s, x) \Big|  \\
 &\leq & \Big| R_2(\rho_s, x) - R_2(\psi_s, x)  \Big| \\
 & +&  R_2(\psi_s, x)  \Big\{1 - \exp\Big[ - \intl{s}^t \Big( h(\rho_\sigma, x) - h(\psi_\sigma, x) \Big) d\sigma \Big]   \Big\}.
\end{eqnarray*}
In the other case, when $\intl{s}^t \Big( h(\rho_\sigma, x) - h(\psi_\sigma, x) \Big) d\sigma < 0$, we have analogously
\begin{eqnarray*}
|D_2| &\leq & \Big| R_2(\rho_s, x) - R_2(\psi_s, x)  \Big| \\
&+&  R_2(\rho_s, x)  \Big\{1 - \exp\Big[ - \intl{s}^t \Big( h(\psi_\sigma, x) - h(\rho_\sigma, x) \Big) d\sigma \Big]   \Big\}.
\end{eqnarray*}
Note that both $R_2(\rho_s, x)$ and $R_2(\psi_s, x)$, as both belong to $B_{T^*, \gamma}(r)$, undergo the same estimation (cf (\ref{LemmaOneEq}))
$$ R_2(\rho_s,x), R_2(\psi_s, x) \leq \frac{3}{2} \cOne e^{2 \gamma \cTwo s} r^2 + 2 \cTwo e^{\gamma \cTwo s} r,$$
which allows us to write
\begin{eqnarray}\label{LemmaTwoEstimateOne}
|D_2| &\leq & \Big| R_2(\rho_s, x) - R_2(\psi_s, x)  \Big| + \Big( \frac{3}{2} \cOne e^{2 \gamma \cTwo s} r^2 + 2 \cTwo e^{\gamma \cTwo s} r  \Big) \nonumber \\
& \times &  \Big\{1 - \exp\Big[ - \intl{s}^t \Big| h(\psi_\sigma, x) - h(\rho_\sigma, x) \Big| d\sigma \Big]   \Big\}.
\end{eqnarray}
We have
\begin{eqnarray*}
&& 1 - \exp\Big[ - \intl{s}^t \Big| h(\psi_\sigma, x) - h(\rho_\sigma, x) \Big| d\sigma \Big] \leq \intl{s}^t \Big| h(\psi_\sigma, x) - h(\rho_\sigma, x) \Big| d\sigma \\
&& = \frac{1}{2} \intl{s}^t \Big| \intl{(\Rd)^2} \Big( c_1(x,y;z) + c_1(y,x;z) \Big) \Big( \rho_\sigma(y) - \psi_\sigma(y) \Big) dy dz  \Big| d\sigma \\
&& \leq \intl{s}^t \cOne ||\rho_\sigma - \psi_\sigma||_{L^\infty} d\sigma \leq \intl{s}^t \cOne e^{\gamma \cTwo \sigma} ||\rho - \psi||_{T^*, \gamma} d\sigma\\
&& \leq \ \cOne e^{\gamma \cTwo t} (t-s) ||\rho - \psi||_{T^*, \gamma},
\end{eqnarray*}
which yields
\begin{equation}\label{LemmaTwoEstimateTwo}
1 - \exp\Big[ - \intl{s}^t \Big| h(\psi_\sigma, x) - h(\rho_\sigma, x) \Big| d\sigma \Big] \leq \ \cOne t e^{\gamma \cTwo t} ||\rho - \psi||_{T^*, \gamma}
\end{equation}
Let us estimate
\begin{eqnarray*}
&& \Big| R_2(\rho_s, x) - R_2(\psi_s, x)  \Big| \\
&& \leq \frac{1}{2} \int\limits_{(\mathds{R}^d)^2} c_1(y,z;x) \ \Big| \exp \Big( - \intRd \phi_1(x-u) \rho_s(u) du \Big) \rho_s(y) \rho_s(z) \\
&& \quad - \exp \Big( - \intRd \phi_1(x-u) \psi_s(u) du \Big) \psi_s(y) \psi_s(z) \Big| dy dz \\
&&  + \frac{1}{2} \int\limits_{(\mathds{R}^d)^2} \Big( c_1(x,y;z) + c_1(y,x;z) \Big)\  \Big| \Big[1 - \exp \Big( - \intRd \phi_1(z-u) \rho_s(u) du \Big) \Big]  \\
&&  \quad \times \rho_s(x) \rho_s(y) - \Big[1 - \exp \Big( - \intRd \phi_1(z-u) \psi_s(u) du \Big) \Big] \psi_s(x) \psi_s(y) \Big| dy dz \\
&&  + \intRd  c_2(y;x) \ \Big| \exp \Big( - \intRd \phi_2(x-u) \rho_s(u) du \Big) \rho_s(y) \\
&& \quad - \exp \Big( - \intRd \phi_2(x-u) \psi_s(u) du \Big) \psi_s(y)  \Big| dy \\
&&  + \intRd  c_2(x;y) \ \Big| \Big[1 - \exp \Big( - \intRd \phi_2(y-u) \rho_s(u) du \Big) \Big] \rho_s(x) \\
&&  \quad - \Big[1 - \exp \Big( - \intRd \phi_2(y-u) \psi_s(u) du \Big) \Big] \psi_s(x)  \Big| dy.
\end{eqnarray*}
Denote by $I_i$ the $i$-th component of the RHS of the above inequality for $i=1,2,3,4$. Then estimating analogously as above we derive
$$I_3, I_4 \leq \ \cTwo \Big(e^{2 \gamma \cTwo s} \phiTwo r + e^{\gamma \cTwo s}\Big) \Big|\Big| \rho - \psi \Big|\Big|_{T^*, \gamma}.$$
Moreover, noting that
\begin{eqnarray*}
&&\Big| \rho_s(y)\rho_s(z) - \psi_s(y)\psi_s(z) \Big| \\
&&\leq \frac{1}{2} \Big( \rho_s(z) + \psi_s(z)  \Big) \Big| \rho_s(y) - \psi_s(y)  \Big| + \frac{1}{2} \Big( \rho_s(y) + \psi_s(y)  \Big) \Big| \rho_s(z) - \psi_s(z)  \Big|,
\end{eqnarray*}
we obtain
\begin{eqnarray*}
&& I_1 \leq \frac{1}{2} \cOne \Big( 2 e^{2 \gamma \cTwo s} r + e^{3 \gamma \cTwo s} \phiOne r^2  \Big) \Big|\Big| \rho - \psi \Big|\Big|_{T^*, \gamma}, \\
&&I_2 \leq \ \cOne \Big( 2 e^{2 \gamma \cTwo s} r + e^{3 \gamma \cTwo s} \phiOne r^2  \Big) \Big|\Big| \rho - \psi \Big|\Big|_{T^*, \gamma}.
\end{eqnarray*}
Therefore
\begin{eqnarray}\label{LemmaTwoEstimateThree}
&&\Big| R_2(\rho_s, x) - R_2(\psi_s, x)  \Big|  \leq \Big[ \frac{3}{2} \cOne e^{\gamma \cTwo s} \Big( 2 e^{\gamma \cTwo s} r + e^{2 \gamma \cTwo s} \phiOne r^2  \Big) \nonumber \\
 &&+ 2 \cTwo e^{\gamma \cTwo s} \Big(e^{\gamma \cTwo s} \phiTwo r + 1 \Big)  \Big]  \Big|\Big| \rho - \psi \Big|\Big|_{T^*, \gamma}.
\end{eqnarray}
Substituting (\ref{LemmaTwoEstimateTwo}) and (\ref{LemmaTwoEstimateThree}) into (\ref{LemmaTwoEstimateOne}) and using it together with (\ref{LemmaTwoEstimateZero}), we obtain (cf \ref{LemmaTwoEq})
$$ \Big| \Big(F(\rho) - F(\psi)\Big)_t (x) \Big| \leq e^{\gamma \cTwo t} f(t) \Big|\Big| \rho - \psi \Big|\Big|_{T^*, \gamma}, $$
where
\begin{eqnarray*}
f(t) &=& t \ \Big[ \frac{3}{2} r^2 \cOne e^{2 \gamma \cTwo t} \Big( \cOne t + \phiOne \Big) \\
&+& r e^{\gamma \cTwo t} \Big( 2 \cOne \cTwo t + 3 \cOne + 2 \cTwo \phiTwo  \Big) + 2 \cTwo \Big].
\end{eqnarray*}
Therefore
$$ \Big|\Big| F(\rho) - F(\psi) \Big|\Big|_{T^*, \gamma} \leq \sup\limits_{t \in [0, T^*]} f(t) \Big|\Big| \rho - \psi \Big|\Big|_{T^*, \gamma}  $$
Note that $f(t)$ is continuous, increasing function of $t$ and $f(0) = 0$. Thus, there exists $T^{**} > 0$ such that $f(T^{**}) < 1 $ and $f(t) \in [0, f(T^{**})]$ for $t \in [0,T^{**}]$. Choosing $T^* = \min(T^{**}, \tilde{T})$, we obtain
$$ \Big|\Big| F(\rho) - F(\psi) \Big|\Big|_{T^*, \gamma} \leq C \Big|\Big| \rho - \psi \Big|\Big|_{T^*, \gamma} $$
with $C = f(T^*) \leq f(T^{**}) < 1$.
\end{proof}

%\begin{acknowledgements}
\paragraph{\bf
Acknowledgement}
This work has been performed in the framework of the joint Polish-German project No
57154469
 ``Dynamics of Large Systems of Interacting Entities" supported by DAAD. The author gratefully acknowledges the support and
the hospitality extended to him during the stay at Bielefeld University where the main part of the work
was done. Special thanks the author wishes to express to Professor Yuri Kozitsky for suggesting the problem and for many fruitful conversations, as without his help this paper would not come into being.

\bibliographystyle{plain}

\end{document}